\newcommand \ba{\mathbf{a}}
\newcommand \bb{\mathbf{b}}
\newcommand \bc{\mathbf{c}}
\newcommand \be{\mathbf{e}}
\newcommand \bg{\mathbf{g}}
\newcommand \bm{\mathbf{m}}
\newcommand \br{\mathbf{r}}
\newcommand \bv{\mathbf{v}}
\newcommand \bx{\mathbf{x}}
\newcommand \by{\mathbf{y}}
\newcommand \bz{\mathbf{z}}
\newcommand \bp{\mathbf{p}}
\newcommand \bq{\mathbf{q}}
\newcommand \bA{\mathbf{A}}
\newcommand \bC{\mathbf{C}}
\newcommand \bG{\mathbf{G}}
\newcommand \1{\mathbf{1}}
\newcommand \bPhi{\boldsymbol{\Phi}}
\newcommand \bSigma{\mathbf{\Sigma}}
\newcommand \bZero{\mathbf{0}}
\newcommand \bPi{\mathbf{\Pi}}
\newcommand \bDelta{\boldsymbol{\Delta}}
\newcommand \bdelta{\boldsymbol{\delta}}
\newcommand \bI{\mathbf{I}}
\newcommand \bE{\mathbf{E}}
\newcommand \bM{\mathbf{M}}
\newcommand \bF{\mathbf{F}}
\newcommand \bX{\mathbf{X}}
\newcommand \bR{\mathbf{R}}
\newcommand \bW{\mathbf{W}}
\newcommand \bTheta{{\mathbf{\Theta}}}
\newcommand \tbb{\tilde{\mathbf{b}}}
\newcommand \tbv{\tilde{\mathbf{v}}}
\newcommand \tbp{\tilde{\mathbf{p}}}
\newcommand \tbq{\tilde{\mathbf{q}}}
\newcommand \cbb{\check{\mathbf{b}}}
\newcommand \tbA{\tilde{\mathbf{A}}}
\newcommand \tbV{\tilde{\mathbf{V}}}
\newcommand \tbTheta{\tilde{\mathbf{\Theta}}}
\newcommand \tTheta{\tilde{\Theta}}
\newcommand \tbGamma{{\tilde{\mathbf{\Gamma}}}}
\newcommand \tGamma{\tilde{\Gamma}}
\newcommand \hbTheta{\hat{\mathbf{\Theta}}}
\newcommand \mcA{\mathcal{A}}
\newcommand \mcC{\mathcal{C}}
\newcommand \mcD{\mathcal{D}}
\newcommand \mcG{\mathcal{G}}
\newcommand \mcL{\mathcal{L}}
\newcommand \mcF{\mathcal{F}}
\newcommand \mcN{\mathcal{N}}
\newcommand \mcM{\mathcal{M}}
\newcommand \mcE{\mathcal{E}}
\newcommand \mcP{\mathcal{P}}
\newcommand \mcS{\mathcal{S}}
\newcommand \mcT{\mathcal{T}}
\newcommand \mcW{\mathcal{W}}
\newcommand \mcX{\mathcal{X}}
\newcommand \mcZ{\mathcal{Z}}
\newtheorem{assumption}{Assumption}
\newtheorem{example}{Example}
\DeclareMathOperator{\diag}{dg}
\DeclareMathOperator{\trace}{Tr}
\DeclareMathOperator{\vectorize}{vec}
\newtheorem{proposition}{Proposition}
\newtheorem{lemma}{Lemma}
\newtheorem{corollary}{Corollary}
\newtheorem{theorem}{Theorem}
\begin{document}
\title{Inverter Probing for Power Distribution\\
Network Topology Processing}
	
\author{Guido Cavraro and Vassilis Kekatos,~\IEEEmembership{Senior Member,~IEEE}}




%

\maketitle

\begin{abstract}
Knowing the connectivity and line parameters of the underlying electric distribution network is a prerequisite for solving any grid optimization task. Although distribution grids lack observability and comprehensive metering, inverters with advanced cyber capabilities currently interface solar panels and energy storage devices to the grid. Smart inverters have been widely used for grid control and optimization, yet the fresh idea here is to engage them towards network topology inference. Being an electric circuit, a distribution grid can be intentionally probed by instantaneously perturbing inverter injections. Collecting and processing the incurred voltage deviations across nodes can potentially unveil the grid topology even without knowing loads. Using grid probing data and under an approximate grid model, the tasks of topology recovery and line status verification are posed respectively as non-convex estimation and detection problems. Leveraging the features of the Laplacian matrix of a tree graph, probing terminal nodes is analytically shown to be sufficient for exact topology recovery if voltage data are collected at all buses. The related non-convex problems are surrogated to convex ones, which are iteratively solved via closed-form updates based on the alternating direction method of multipliers and projected gradient descent. Numerical tests on benchmark feeders demonstrate that grid probing can yield line status error probabilities of $10^{-3}$ by probing 40\% of the nodes. 
\end{abstract}

\begin{IEEEkeywords}
Power distribution networks; topology learning; smart inverters; linearized distribution flow model.
\end{IEEEkeywords}

\section{Introduction}\label{sec:intro}
\allowdisplaybreaks
To improve reliability, provide ancillary services, and avoid network violations, utilities need to control the power injections from solar resources, energy storage devices, and microgenerators. To perform any meaningful grid monitoring and optimization task though, a detailed model for the underlying grid topology is needed. Some utilities have limited information on their primary and/or secondary networks. Others may know their line infrastructure and impedances, but not which lines are currently energized --- recall that grids are oftentimes reconfigured for maintenance, to balance loads, or alleviate faults. Topology processing could be broadly classified into \emph{topology verification} and \emph{topology identification}. In topology verification, the operator knows the line infrastructure and impedances and would like to find the energized lines. Topology identification aims to find both the connectivity and line impedances, and is hence a more challenging task.

In transmission networks, topology processing is usually handled through the generalized state estimator (GSE)~\cite[Sec.~4.10]{ExpConCanBook}. Since the GSE does not carry over to distribution systems due to lack of observability, recent research efforts exploit second-order statistics of electric quantities to recover distribution grid topologies. Reference~\cite{BoSch13} collects voltage magnitudes from all nodes, and leverages the structure of their covariance matrix and its inverse to identify the grid topology. The previous scheme has been improved by waiving the assumption on identical resistance-to-reactance ratios across lines~\cite{Deka1}, and by further utilizing prior information on power injection covariances at terminal buses~\cite{Deka4,ParkDeka}. Topology identification has also been tackled using graphical models by exploiting the mutual information of voltage data~\cite{WengLiaoRajagopal17}, or by inspecting the entries of the voltage covariance matrix~\cite{7058419,7541005}. A Wiener filtering approach using wide-sense stationary processes on radial networks is reported in~\cite{TalDeMate2017}. The aforementioned schemes rely on ensemble covariances and thus, reliable topology estimates can be obtained only after long observation intervals (e.g., hours). Given that a feeder typically undergoes 5-10 switching events on a daily basis~\cite{CavArg2014}, grid topologies may have changed during data collection.

To avoid delays, several works infer use a single or a few data snapshots. Commencing with topology verification, the problem has been cast as a spanning tree identification task given load data at all nodes and power flow readings at selected lines~\cite{sevlian2015distribution}: Exploiting the fundamental cycles of the grid graph, flow meters are optimally placed under noiseless and noisy data setups. Given a finite number of voltage snapshots, topology verification is posed as maximum likelihood and maximum \emph{a-posteriori} probability detection problems in~\cite{CaKe2017}. In \cite{zhao2017learning}, deep neural network-based classifiers are able to detect transmission line statuses; nevertheless, the standard PQ/PV power flow dataset used as input to the classifiers may not be available in distribution grids. Reference~\cite{patopa} recognizes that line impedances appear linearly in the power flow equations, and estimates them via a total least squares fit. With injections and voltage phasors metered at all nodes, the same work tackles topology identification using a binary search on the related likelihoods. In \cite{Ardakanian17}, given bus voltage and current phasors, the admittance matrix of a possibly meshed grid is found via linear regression. If data are missing only from zero-injection buses, the Kron-reduced admittance matrix can be recovered via a low rank-plus-sparse decomposition. 

Different from the previous schemes, topology processing can be accomplished by \emph{actively} rather than \emph{passively} collecting grid data. A microgrid topology can be inferred by transmitting power line communication signals and then measuring their reception time at different buses~\cite{ErTpVi13}. Reference \cite{Scaglione2017} perturbs the primary droop parameters of micro-generators and acquires a least-square estimate of the bus admittance matrix through the grid response. Leveraging the communication, actuation, and sensing capabilities of smart inverters, our previous works \cite{BheKeVe2017} and \cite{BKVZ17} purposefully probe the grid by varying the (re)active power injections at selected buses, record the incurred voltage responses, and thus, infer the complex loads at non-actuated buses. The transition matrix of a linear dynamical system is deciphered using active perturbations in \cite{6203379}, \cite{6966724}; here, grid dynamics are ignored due to timescales.

Building on the idea of active data collection, this work engages smart inverters to actuate the grid and learn network topologies upon collecting voltage deviations across nodes. Our contribution is on three fronts: First, we extend the idea of grid probing to the pertinent tasks of network topology identification and verification. Using non-synchonized data and under an approximate grid model, the two tasks are posed as non-convex data fits to estimate or detect the Laplacian matrix of a tree graph. Second, leveraging radial structures, we provide sufficient conditions on the placement of probing buses for successful topology recovery. It is shown that probing the candidate leaf nodes is sufficient. Third, we put forth convex relaxations of the original problems and devise efficient algorithms for solving them. 

Compared to existing works on distribution grid topology identification, our key differences are on the data acquisition and the analytical fronts. Regarding data acquisition, previous works passively collect grid data from which they estimate second-order moments of voltages and/or injections~\cite{BoSch13}, \cite{Deka4}, \cite{TalDeMate2017}, \cite{Ardakanian17}, \cite{CaKe2017}, \cite{ParkDeka}. Our inverter probing scheme accelerates data acquisition and simplifies data modeling at the expense of perturbing the grid. 

On the analytical side, we show that if: \emph{a1)} a grid is perturbed at all terminal buses, and \emph{a2)} voltage responses are collected at all buses, then its topology is identifiable by solving a non-convex problem. This non-convex problem is subsequently relaxed to a convex surrogate, whose performance is only numerically evaluated. Assumption \emph{a1)} can be waived when using the convex relaxation-based solver, though again there are no identifiability guarantees. Assumption \emph{a2)} has been previously adopted in~\cite{BoSch13}, \cite{TalDeMate2017}, and \cite{CaKe2017}; and is required for our convex solver of Section~\ref{subsec:relaxid}. The setup where data are collected only on a subset of buses has been considered in \cite{Deka4}, \cite{ParkDeka}, \cite{Ardakanian17}. The latter works establish that the grid topology is identifiable only under certain conditions, e.g., that all non-metered buses are zero-injection buses, or that they are connected to at least three metered nodes. In a nutshell, other works have waived \emph{a2)} to recover a reduced grid graph, whereas this work requires \emph{a2)} as the only practical means of recovering the actual topology.
\color{black}

The rest of this work is organized as follows. Section~\ref{sec:model} revisits graph theory preliminaries and an approximate grid model. Section~\ref{sec:identification} engages grid probing for topology identification, provides identifiability conditions, and solves the relaxed problem iteratively using closed-form updates based on the alternating direction method of multipliers (ADMM). Section~\ref{sec:verification} poses topology verification as a binary detection problem under tree Laplacian matrix constraints, establishes verifiability, and solves the relaxed minimization via a projected gradient descent scheme. Our claims are corroborated using the IEEE 13- and 37-bus feeders in Section~\ref{sec:tests}. Conclusions and future directions are summarized in Section~\ref{sec:conclusions}.

Regarding \emph{notation}, lower- (upper-) case boldface letters denote column vectors (matrices). Calligraphic symbols are reserved for sets. Symbol $^{\top}$ stands for transposition. Vectors $\mathbf{0}$ and $\mathbf{1}$ are the all-zero and all-one vectors, while $\be_m$ is the $m$-th canonical vector. Symbol $\|\mathbf{x}\|_2$ denotes the $\ell_2$-norm of $\mathbf{x}$ and $\diag(\mathbf{x})$ defines a diagonal matrix having $\mathbf{x}$ on its diagonal. A symmetric positive (semi)definite matrix is denoted as $\mathbf{X}\succ \mathbf{0}$ $(\mathbf{X}\succeq \mathbf{0})$, while $|\bX|$ and $\trace(\bX)$ are the determinant and trace of $\bX$. The sets of symmetric and psd $N\times N$ matrices are denoted by $\mathbb S^N$ and $\mathbb S^N_+$, respectively.

\section{Modeling Preliminaries}\label{sec:model}
Before presenting our probing scheme, this section reviews concepts from graph theory and an approximate grid model. 

\subsection{Preliminaries from graph theory}\label{subsec:graph}
Let $\mathcal{G}=(\mathcal{V},\mathcal{E})$ be an undirected graph, where $\mathcal{V}$ is the set of nodes and $\mathcal{E}$ the set of edges $\mathcal{E}:=\{(m,n): m,n \in \mathcal{V}\}$. A rooted tree is a connected graph without loops with one node designated as the root and indexed by 0. In a tree graph, a \emph{path} is the unique sequence of edges connecting two nodes. The set of nodes adjacent to the edges forming the path between nodes $n$ and $m$ will be denoted by $\mathcal{P}_{n,m}$. The nodes belonging to $\mcA_m:=\mcP_{0,m}$ are termed the \emph{ancestors} of node $m$; see also Fig.~\ref{fig:graph_bckgrnd}. If $n\in\mcA_m$, then $m$ is a \emph{descendant} of node $n$. The descendants of node $m$ comprise the set $\mcD_m$. By convention, $m \in \mcA_m$ and $m\in \mcD_m$. If $n \in \mcA_m$ and $(m,n)\in \mathcal E$, node $n$ is the \emph{parent} of $m$. A node without descendants is called a \emph{leaf}. Leaf nodes are collected in the set $\mcF$, while non-leaf nodes will be termed \emph{internal} nodes. 

The \emph{depth} $d_m$ of node $m$ is defined as the number of its ancestors, i.e., $d_m:= |\mcA_m|$. If $n\in\mcA_m$ and $d_n=k$, node $n$ is the unique $k$-depth ancestor of node $m$ and will be denoted by $\alpha_{m}^k$ for $k=0,\ldots,d_m$. Finally, the \emph{$k$-th level set} of node $m$ is defined as
\begin{equation}\label{eq:levelset}
\mcN_m^k := \left\{\begin{array}{ll}
\mcD_{\alpha^k_m} \setminus \mcD_{\alpha^{k+1}_m}&,~ k=0,\ldots,d_m-1\\
\mcD_m&,~ k=d_m.
\end{array}\right.
\end{equation}
The concept of the level set is illustrated in Figure~\ref{fig:graph_bckgrnd}. In essence, the level set $\mcN_m^k$ consists of node $\alpha^k_m$ and all the subtrees rooted at $\alpha^k_m$ excluding the one containing node $m$. Level sets feature the ensuing properties that will be used later.

\begin{lemma}\label{le:Nmk}
Let $m$ be a node in a tree graph.
\renewcommand{\labelenumi}{\roman{enumi})}
\begin{enumerate}
\item Node $\alpha^{k}_m$ with $0\leq k \leq d_m$, is the only node in $\mcN_m^k$ at depth $k$, while the remaining nodes in $\mcN_m^k$ are at larger depths; 
\item For all $n\in\mcN_m^k$, then $\alpha^{k}_n=\alpha^{k}_m$ and $\alpha^{k}_n\in\mcN_m^k$; and 
\item If $m$ is a leaf node, then $\mcN_m^{d_m}=\{m\}$. 
\end{enumerate}
\end{lemma}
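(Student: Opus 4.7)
The plan is to derive all three claims from two basic facts about a rooted tree: (a) depth strictly increases along any path away from the root, so each node has a unique ancestor at each smaller depth; and (b) the descendants $\mcD_v$ of a node $v$ are exactly those nodes that carry $v$ on their root-to-node path. With these in hand, every part of the lemma becomes a bookkeeping exercise on top of the definition of $\mcN_m^k$.

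For part (i), I would split on whether $k=d_m$ or $k<d_m$. When $k=d_m$, the definition gives $\mcN_m^{d_m}=\mcD_m$, and (a) directly makes $m$ the unique element at depth $d_m$ while all other descendants sit at strictly greater depths. When $k<d_m$, I first verify that $\alpha_m^k$ survives the set difference: it trivially lies in $\mcD_{\alpha_m^k}$, and it cannot lie in $\mcD_{\alpha_m^{k+1}}$ because $\alpha_m^{k+1}$ is a strict descendant of $\alpha_m^k$ and the descendant relation on a tree is antisymmetric. Then, for any $n\in\mcN_m^k\subseteq\mcD_{\alpha_m^k}$, fact (b) places $\alpha_m^k$ on the root-to-$n$ path at depth $k$, so $d_n\geq k$; if equality held then $n$ would coincide with $\alpha_m^k$ by the uniqueness stated in (a), showing the remaining elements of $\mcN_m^k$ have depth strictly larger than $k$.

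For part (ii), I would take any $n\in\mcN_m^k$, use $n\in\mcD_{\alpha_m^k}$ together with (b) to place $\alpha_m^k$ at depth $k$ on the unique root-to-$n$ path, and then invoke uniqueness of the depth-$k$ ancestor to conclude $\alpha_n^k=\alpha_m^k$. The membership $\alpha_n^k\in\mcN_m^k$ is then a restatement of part (i). Part (iii) is essentially a tautology: a leaf has no strict descendants, so $\mcD_m=\{m\}$, and the second branch of \eqref{eq:levelset} gives $\mcN_m^{d_m}=\mcD_m=\{m\}$.

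I do not anticipate any real obstacle. The only step that needs care is the verification in part (i) that $\alpha_m^k$ is not accidentally removed when we subtract $\mcD_{\alpha_m^{k+1}}$; once that antisymmetry argument is in place, everything else is a direct unfolding of the level-set definition together with the uniqueness of ancestors at each depth in a tree.
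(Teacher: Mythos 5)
Your proof is correct and follows essentially the same route as the paper, which merely remarks that all three properties ``follow readily'' from the observation that $\mcN_m^k\subseteq\mcD_{\alpha_m^k}$ together with standard facts about rooted trees. You have simply written out in full the bookkeeping the paper leaves implicit, including the one step genuinely worth checking (that $\alpha_m^k$ is not removed by subtracting $\mcD_{\alpha_m^{k+1}}$), so there is nothing to correct.
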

The properties of Lemma \ref{le:Nmk} follow readily after observing that given a node $m$ and a depth $k\in\{0,\ldots,d_m\}$, the level set $\mcN_m^k$ is a subset of $\mcD_{\alpha^k_m}$. Representative examples of these properties are illustrated in the grid of Figure~\ref{fig:graph_bckgrnd}.

\begin{figure}[t]
\centering
\includegraphics[width=0.42\textwidth]{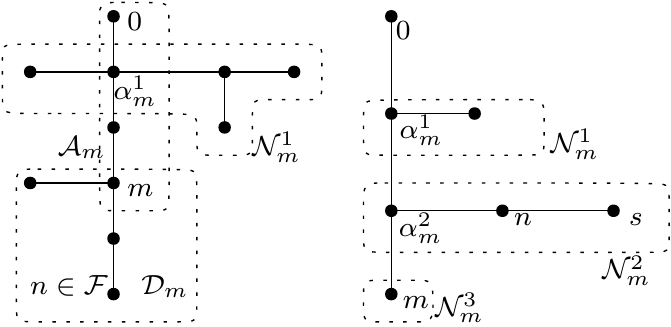}
\caption{Node $n$ is a leaf node, while $m$ is an internal node of the left graph. The ancestor ($\mcA_m$) and descendant ($\mathcal{D}_m$) sets for node $m$ are also shown. The level set $\mcN_m^1$ consists of $\alpha_m^1$ and the subtrees rooted at $\alpha_m^1$ excluding the subtree containing node $m$. The properties of Lemma~\ref{le:Nmk} can be easily checked in the right graph.}
\label{fig:graph_bckgrnd}
\end{figure}

\subsection{Power flow model}
A radial single-phase power distribution grid having $N+1$ buses can be modeled by a graph $\mcG_o=(\mcN,\mcL)$. The nodes in $\mcN:=\{0,\ldots,N\}$ represent grid buses, and the edges in $\mcL$ distribution lines. The active (reactive) power injection at bus $n$ is denoted by $p_n$ ($q_n$), while $v_n$ is its voltage magnitude. The substation bus is indexed by $n=0$ and its voltage is fixed at $v_0=1$. Power distribution networks are oftentimes operated in a radial structure, so that $\mcG_o$ is a tree rooted at the substation. The voltage magnitudes and power injections at all buses excluding the substation are collected accordingly in vectors $\bv$, $\bp$, and $\bq$. 

Let $r_\ell+j x_\ell$ be the impedance of line $\ell$, and collect all the impedances in vector $\br+j\bx$. The grid connectivity is captured by the branch-bus incidence matrix $\tbA \in\{0,\pm1\}^{L\times (N+1)}$ that can be partitioned into its first and the rest of its columns as $\tbA=[\mathbf{a}_0~\bA]$. For a radial grid, the \emph{reduced incidence matrix}~$\bA$ is square and invertible~\cite{GodsilRoyle}. Since $\tbA \mathbf 1 = \mathbf{0}$, it follows that
\begin{equation}\label{eq:a0}
\mathbf{a}_0 = -\bA \mathbf{1}.
\end{equation}

Although power injections are non-linearly related to nodal voltage phasors, upon linearizing complex power injections around the flat voltage profile $\mathbf{1}+j\mathbf{0}$, the bus voltage magnitudes can be approximated by~\cite{Saverio2}
\begin{equation}\label{eq:model}
\bv = \bR_o\bp + \bX_o\bq + \1
\end{equation}
where $\bR_o^{-1}:=\bA^\top \diag^{-1}(\br)\bA$ and $\bX_o^{-1}:=\bA^\top \diag^{-1}(\bx)\bA$. A key property of $\bR_o$ is that its $(m,n)$-th entry equals the sum of the line resistances between the nodes in $\mcA_m \cap \mcA_n$~\cite{Deka4}
\begin{equation}\label{eq:entriesR1}
[R_o]_{mn} = \sum_{\substack{\ell = (c,d) \in \mcL \\ c,d\in \mcA_m \cap \mcA_n}} r_{\ell}.
\end{equation}
Based on \eqref{eq:entriesR1}, one can recognize that the entry $[R_o]_{mn}$ equals the voltage drop from $v_0$ incurred at bus $m$ when a unitary active power is withdrawn at bus $n$ while the remaining buses are unloaded. Leveraging this interpretation, three useful properties of $\bR_o$ are presented next.

\begin{lemma}\label{le:entriesR2}
If $m$, $n$, and $s$ are nodes in a grid represented by a tree graph, then
\renewcommand{\labelenumi}{\roman{enumi})}
\begin{enumerate}
\item $[R_o]_{mm} \geq [R_o]_{mn}$ for all $n\neq m$, with strict inequality if $m$ is a leaf;
\item two nodes $n$ and $s$ belong to $\mcN_m^k$ for some $k$ if and only if $[R_o]_{mn} = [R_o]_{ms}$; and
\item if $n\in \mcN_m^k$ and $s\in \mcN_m^{k+1}$, then $[R_o]_{mn} = [R_o]_{ms} + r_{\left(\alpha^{k}_m,\alpha^{k+1}_m\right)} > [R_o]_{ms}$.
\end{enumerate}
\end{lemma}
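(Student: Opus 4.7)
Throughout the proof I would work directly from the characterization~\eqref{eq:entriesR1}, which identifies $[R_o]_{mn}$ with the sum of the line resistances indexed by edges whose both endpoints lie in $\mcA_m\cap\mcA_n$. In a rooted tree, this intersection is always the initial segment of $\mcA_m$ running from the root down to the deepest common ancestor of $m$ and $n$. Consequently, each part of the lemma reduces to pinning down that deepest common ancestor in terms of the level-set structure, after which the comparison of positive resistance sums is mechanical.

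For part~i), I would observe that $\mcA_m\cap\mcA_m=\mcA_m$ forces $[R_o]_{mm}$ to aggregate every resistance on the root-to-$m$ spine, whereas for any $n\neq m$ the inclusion $\mcA_m\cap\mcA_n\subseteq\mcA_m$ only trims that spine, yielding $[R_o]_{mm}\geq[R_o]_{mn}$. When $m$ is a leaf, Lemma~\ref{le:Nmk}~iii) gives $\mcN_m^{d_m}=\{m\}$, so $n\neq m$ implies $m\notin\mcA_n$; in particular the edge linking $m$ to its parent is absent from the sum defining $[R_o]_{mn}$ but present in that defining $[R_o]_{mm}$, and positivity of the line resistances supplies the strict inequality.

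Parts~ii) and~iii) both hinge on establishing the identification
\begin{equation*}
n\in\mcN_m^k\ \Longrightarrow\ \mcA_m\cap\mcA_n=\{\alpha_m^0,\alpha_m^1,\ldots,\alpha_m^k\},
\end{equation*}
which I would derive by combining $n\in\mcD_{\alpha_m^k}$ (which forces $\alpha_m^0,\ldots,\alpha_m^k\in\mcA_n$) with $n\notin\mcD_{\alpha_m^{k+1}}$ (which excludes $\alpha_m^{k+1},\ldots,\alpha_m^{d_m}=m$ from $\mcA_n$). Substituting into~\eqref{eq:entriesR1} reveals that $[R_o]_{mn}$ depends only on the level index $k$, immediately delivering the ``only if'' half of~ii). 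For the converse I would invoke that $\{\mcN_m^k\}_{k=0}^{d_m}$ partitions the node set, so two nodes sitting in distinct level sets produce nested common-ancestor paths of different lengths; strict positivity of the line resistances then forces the aggregate sums to differ. Part~iii) drops out in a single step, since sliding the level index from $k$ to $k+1$ appends exactly the single edge $(\alpha_m^k,\alpha_m^{k+1})$ to the common-ancestor path, so the two entries differ by precisely $r_{(\alpha_m^k,\alpha_m^{k+1})}$.

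The main obstacle is little more than careful bookkeeping of the depth/ancestor conventions and of which edges actually contribute to~\eqref{eq:entriesR1}; once the identification of $\mcA_m\cap\mcA_n$ with the root-to-$\alpha_m^k$ initial segment is in hand, all three claims dissolve into comparisons of positive-weight sums over nested chains of edges along the $m$-spine.
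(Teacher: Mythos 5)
Your strategy is exactly the one the paper implicitly relies on: the authors give no explicit proof of Lemma~\ref{le:entriesR2}, offering only the remark that it follows from \eqref{eq:entriesR1} and the voltage-drop interpretation, and your identification of $\mcA_m\cap\mcA_n$ with the root-to-(deepest common ancestor) segment, together with the observation that $n\in\mcN_m^k$ precisely when that common ancestor is $\alpha_m^k$, is the intended argument. Parts i) and ii) are handled correctly, including the partition argument for the converse of ii) and the strictness for leaves in i).

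One point needs attention in part iii). Your derivation correctly shows that passing from level $k$ to level $k+1$ \emph{appends} the edge $(\alpha_m^k,\alpha_m^{k+1})$ to the common-ancestor path, which yields $[R_o]_{ms}=[R_o]_{mn}+r_{(\alpha_m^k,\alpha_m^{k+1})}>[R_o]_{mn}$ for $n\in\mcN_m^k$ and $s\in\mcN_m^{k+1}$. This is the \emph{reverse} of the identity printed in the lemma, and you state only that "the two entries differ by precisely $r$" without committing to a direction, so the mismatch goes unremarked. You should flag it explicitly: the printed version of iii) cannot be right, since it would make $[R_o]_{mn}$ decreasing in the level index $k$ and hence make the diagonal entry $[R_o]_{mm}$ (which sits at level $d_m$) the row \emph{minimum}, contradicting part i); it is also inconsistent with how claim iii) is invoked in the proof of Theorem~\ref{th:probe}, where $n\in\mcN_m^k$ and $s\in\mcN_m^{k-1}$ give $[R(\bb)]_{mn}-[R(\bb)]_{ms}=r>0$. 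So your argument proves the corrected statement; make the direction of the inequality explicit and note that the roles of $n$ and $s$ in the published claim iii) are swapped.
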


\section{Grid Probing for Topology Identification}\label{sec:identification}
Existing topology processing schemes rely on passively collected smart meter data. Here, we put forth an active data acquisition protocol for topology identification. The idea is to leverage the communication, actuation, and sensing functionalities of smart inverters. An inverter can be commanded to shed solar generation, (dis)-charge a battery, or change its power factor within milliseconds. This enables a new data collection paradigm, where the system operator purposefully probes the electric grid by changing inverter injections and measuring the electric circuit response to possibly identify the grid topology.

Let us first model the data collected via probing. The buses hosting controllable inverters comprise set $\mcC \subseteq \mcN$ with $C:=|\mcC|$. Consider the probing action at time $t$. Each bus $m\in\mcC$ perturbs its active injection by $\delta_m(t)$. All inverter perturbations are stacked in $\bdelta(t)\in\mathbb{R}^C$. The incurred perturbation in voltage magnitudes $\tbv(t):=\bv(t)-\bv(t-1)$ is expressed from \eqref{eq:model} as
\begin{equation}\label{eq:dv}
\tbv(t) = \bR_o\bI_\mcC \bdelta(t) + \mathbf{e}(t)
\end{equation}
where the $N\times C$ matrix $\mathbf{I}_\mcC$ collects the canonical vectors associated with the buses in $\mcC$. The error vector $\mathbf{e}(t)$ captures measurement noise, the modeling error introduced by the LDF approximation, and voltage deviations attributed to possible load variations during probing.

The grid is perturbed over $T$ probing periods, each one lasting for a second or so. Given the larger resistance values of distribution lines, the modes of a distribution network are expected to be in the order of microseconds. Assuming that inverters have been commanded not to perform frequency or voltage control (at last for the period of probing), the duration of one second or so allows for a steady-state analysis of the feeder. Stacking the probing actions $\{\bdelta(t)\}_{t=1}^T$, the measured voltage deviations $\{\tbv(t)\}_{t=1}^T$, and the error terms $\{\be(t)\}_{t=1}^T$ as columns of matrices $\bDelta$, $\tbV$, and $\bE$ accordingly, yields
\begin{equation}\label{eq:dV}
\tbV = \bR_o \bI_{\mcC} \bDelta + \bE.
\end{equation}

Matrix $\bDelta\in\mathbb{R}^{C\times T}$ can be designed to be full row-rank, e.g., by setting $T=C$ and $\bDelta_1:=\diag(\{\delta_m\})$ with $\delta_m\neq 0$ for all $m\in\mcC$. A diagonal $\bDelta$ enjoys simpler synchronization, since each inverter probes the grid at different time slots. Each entry $\delta_m$ can be selected as the maximum active power deviation, inverter $m$ can implement. For example, if inverter $m$ produces solar energy $p_m^g$, it can drop it instantaneously to zero, so that $\delta_m=p_m^g$. Another meaningful choice is $\bDelta_2:= \diag(\{\delta_m\})\otimes [+1~-1]$, where $\otimes$ is the Kronecker product. In this case $T=2C$ and each inverter induces two probing actions: It first drops its generation from $p_m^g$ to zero yielding a perturbation of $\delta_m=p_m^g$. It then resumes generation at its nominal value $p_m^g$, thus incurring a perturbation of $-\delta_m$. 

The model in \eqref{eq:dV} and our subsequent developments consider perturbing active power injections yielding rise to \emph{active probing}. The scheme carries over to \emph{reactive probing}. Although reactive probing may be more practical since no actual power is curtailed, its magnitude may be confined by inverter apparent power constraints during periods of high solar generation. Beyond these implementation issues, the two probing modes are virtually identical. Active probing can be first used to infer the topology and line resistances. Then, line reactances can be recovered using a least-squares fit on data collected from reactive probing. The roles in this process can be obviously reversed. A joint active and reactive probing scheme is not recommended, since that would complicate modeling significantly.

Grid probing is definitely an invasive technique. Perturbing (re)active injections introduces instantaneous voltage variations on customer buses. Power electronics are known to introduce power quality issues with harmonics, and probing can add further to that. Nonetheless, regulation standards do tolerate voltage excursions from the desired band for short periods of time~\cite{ansic84}. Such events happen frequently due to startup currents of induction motors, switching of capacitor banks, and the natural fluctuations in power output of rooftop photovoltaics; real-world data from the Pecan Str dataset indicate that solar generation can drop by 80\% within 10 sec~\cite{pecandata}. Then, solar-enabled grids need to deal with voltage excursions anyway. Probing thus provide an additional toolbox for utilities to use, perhaps in tandem with techniques relying on passively collected grid data.
\color{black}

\subsection{Problem formulation}\label{subsec:problem-id}
Topology identification can be now posed as the task of recovering $\bR_o$ given $(\tbV,\bDelta)$ using the data model of \eqref{eq:dV}. Interpreting power perturbations and voltages respectively as the inputs and outputs of a system, topology processing can be posed as a system identification problem. This a major advantage over the existing \emph{blind} identification schemes that rely on second-order statistics of voltages (outputs)~\cite{BoSch13,Deka1}. Beyond requiring fewer data and shortening the acquisition time, the new method waives restricting statistical assumptions, such as that power injections are uncorrelated across nodes \cite{Deka1,Deka4}; or independent across time \cite{BoSch13,WengLiaoRajagopal17}. Moreover, different from other system identification-based approaches \cite{patopa,Ardakanian17}, only perturbations on power injections at probed buses are required. 

Rather than casting topology identification as a general linear system identification task, one could further exploit the properties of $\bR_o$: From the definition of $\bR_o$ in \eqref{eq:model}, its inverse
\begin{equation}\label{eq:reducedLapl}
\bTheta_o:=\bR_o^{-1}=\bA^\top \diag^{-1}(\br)\bA
\end{equation}
is a \emph{reduced weighted Laplacian} for the underlying graph $\mcG_o$ with weights equal to the inverse line resistances. 

The grid topology is equivalently captured by the non-reduced Laplacian matrix $\tbTheta_o=\tbA^\top \diag^{-1}(\br)\tbA$. To see this, note that the two matrices are related through the linear mapping $\bPhi: \mathbb S^N \rightarrow \mathbb S^{N+1}$ as [cf.~\eqref{eq:a0}]
\begin{equation}\label{eq:Phi}
\tbTheta_o=\bPhi(\bTheta_o):=\begin{bmatrix}
\1^\top \bTheta_o \1 & - \1^\top \bTheta_o\\
- \bTheta_o \1 & \bTheta_o
\end{bmatrix}.
\end{equation}
As a consequence, finding $\bTheta_o$ is equivalent to finding $\tbTheta_o$. Because the entry $[\tilde \Theta_o]_{mn}$ equals $ - r_{(m,n)}^{-1}$ if nodes $m$ and $n$ are directly connected, and zero otherwise, we aim at estimating $\bTheta_o$ to unveil the grid topology and estimate line resistances. Line reactances can be similarly found via reactive probing.

We will next see how the properties of $\tbTheta_o$ translate to $\bTheta_o$. Being the Laplacian of a connected graph, matrix $\tbTheta_o$ is known to be symmetric positive semidefinite ($\tbTheta_o \succeq \mathbf{0}$) having zero as a simple eigenvalue and $\1$ as the related eigenvector~\cite{chung1997spectral}
\begin{equation}\label{eq:Theta1=0}
\tbTheta_o \1 = \mathbf{0}.
\end{equation}
Matrix $\bTheta_o$ is symmetric strictly positive definite ($\bTheta_o\succ \mathbf{0}$), due to \eqref{eq:reducedLapl} and the fact that $\bA$ is non-singular~\cite{VKZG16}. The sign information on the off-diagonal entries of $\tbTheta_o$ carries over to $\bTheta_o$. From \eqref{eq:Phi}--\eqref{eq:Theta1=0} and because the first column of $\tbTheta_o$ has non-positive off-diagonal entries, it also follows that $\bTheta_o \1 \geq \mathbf{0}$.

The grid operator may know that two specific buses are definitely connected, e.g., through flow sensors or line status indicators. To model known line statuses, let us introduce matrix $\tbGamma\in\mathbb{S}^{N+1}$ with $[\tGamma]_{mn}=0$ if line $(m,n)$ is known to be non-energized; and $[\tGamma]_{mn}=1$ if there is no prior information for line $(m,n)$. If there is no information for any line, then apparently $\tbGamma = \mathbf 1 \mathbf 1^\top$. Based on $\tbGamma$, define the set
\begin{equation*}
\mcS(\tbGamma) := \left\{
\bTheta:
\begin{array}{l}
\Theta_{mn}\leq 0,~\text{if}~[\tGamma]_{mn} = 1 \\
\Theta_{mn} = 0,~\text{if}~[\tGamma]_{mn} = 0
\end{array} m,n\in\mcN, m\neq n
\right\}.
\end{equation*}
The set $\mcS(\tbGamma)$ ignores possible prior information on lines fed directly by the substation. This information is encoded on the zero-th column of $\tbGamma$. In particular, if $[\tGamma]_{0n}=1$, then $[\tTheta]_{0n}\leq 0$ and $\sum_{m=1}^N[\Theta]_{mn}\geq 0$. Otherwise, it holds that $[\tTheta]_{0n}=\sum_{m=1}^N[\Theta]_{mn}=0$. The two properties related to lines directly connected to the substation are captured by the set
\begin{equation*}
\mcS_0(\tbGamma) := \left\{
\bTheta:
\begin{array}{l}
\mathbf{e}_n^\top\bTheta \1 \geq 0,~\text{if}~[\tGamma]_{0n} = 1 \\
\mathbf{e}_n^\top\bTheta \1 = 0,~\text{if}~[\tGamma]_{0n} = 0
\end{array}, n\in \mcN
\right\}.
\end{equation*}

Summarizing, the set of admissible reduced Laplacian matrices for arbitrary graphs with prior edge information $\tbGamma$ is
\begin{equation}\label{eq:mcM}
\mcM:=\left\{\bTheta: \bTheta \in \mcS(\tbGamma)\cap \mcS_0(\tbGamma), \bTheta = \bTheta^\top \right\}.
\end{equation}
The set $\mcM$ is convex since it is described by a system of linear (in)equalities in $\bTheta$ for given $\tbGamma$. By invoking the Gershgorin's disc theorem, it can be shown that $\bTheta\succeq \mathbf{0}$ for all $\bTheta\in\mcM$, that is $\mcM\subseteq \mathbb{S}^+$. The reduced Laplacian matrices included in $\mcM$ correspond to possibly meshed and/or disconnected graphs. 

Enforcing two additional properties on $\bTheta$ can render it a proper reduced Laplacian of a tree network. First, the Laplacian matrix in its non-reduced form $\bPhi(\bTheta)$ should have exactly $2N$ non-zero off-diagonal entries related to the $N$ edges of a tree with $N+1$ nodes. Second, the reduced Laplacian $\bTheta$ should be strictly positive definite since the network is connected. These two properties are modeled as
\begin{equation}\label{eq:mcT}
\mcT:=\left\{\bTheta: \|\bPhi(\bTheta)\|_{0,\text{off}}=2N, \bTheta\succ \mathbf{0}\right\}
\end{equation}
where $\|\mathbf{X}\|_{0,\text{off}}$ counts the number of non-zero off-diagonal entries of $\mathbf{X}$. The set $\mcT$ encodes the information that tree networks have the \emph{sparsest} $\bPhi(\bTheta)$'s associated with \emph{non-singular} $\bTheta$'s. Unfortunately, the constraint $\|\bPhi(\bTheta)\|_{0,\text{off}}=2N$ is non-convex: the non-zero entry count of a vector is a non-convex function since for example $\|\theta \be_1+(1-\theta)\be_2\|_0> \theta \|\be_1\|_0 +(1-\theta)\|\be_2\|_0$ for $\theta=1/2$ and $\be_i$ being the $i$-th canonical vector. Moreover, the set $\bTheta\succ \mathbf{0}$ is open since it does not include the boundary of the positive semidefinite cone.

Let us now return to the task of topology recovery using the model in \eqref{eq:dV}. An estimate $\hat \bTheta$ of $\bTheta_o$ could be obtained via a (weighted) least-square (LS) fit of the probing data under the Laplacian constraints, that is
\begin{equation}\label{eq:id}
\min_{\bTheta\in \mcM\cap \mcT}f(\bTheta) :=  \frac{1}{2} \|\bTheta \tbV - \bI_{\mcC} \bDelta\|_{\bW}^2.
\end{equation}
The matrix norm $\|\bX\|_\mathbf{W}$ is defined for a weighting matrix $\bW \succ \mathbf{0}$ as $\|\bX\|_\mathbf{W}^2:=\trace(\bX^\top \bW\bX)=\|\bW^{1/2}\bX\|_F^2$ with $\|\bX\|_F$ being the Frobenius norm of $\bX$.

Setting $\bW=\bI_N$ is the simplest option for the weighting matrix. If additional information on the probing data is available, other options can be used. For example, if the error term in \eqref{eq:dv} is primarily attributed to load variations $\tbp_L(t):=\bp_L(t)-\bp_L(t-1)$ and $\tbq_L(t):=\bq_L(t)-\bq_L(t-1)$, where $\bp_L$ and $\bq_L$ represent the active and reactive power absorbed by loads, the error term can be approximated as
\begin{equation*}
\mathbf{e}(t)\simeq \bR_o\tbp_{L}(t)+\bX_o \tbq_{L}(t).
\end{equation*}
If we further assume that $\bX_o=\gamma \bR_o$ for a complex scalar $\gamma$ as in \cite{CaKe2017}, the error term becomes 
\begin{equation}\label{eq:e_approx}
\mathbf{e}(t)\simeq \bR_o [\tbp_L(t)+\gamma \tbq_L(t)].
\end{equation}
The latter assumption can be justified because the reactance-to-resistance ratios $x_\ell/r_\ell$ do not vary significantly across distribution lines; see e.g., \cite[Table~I]{CaKe2017}. Approximating these ratios as $x_\ell/r_\ell=\gamma$ for all $\ell\in \mathcal{E}$ provides $\bX_o\simeq \gamma \bR_o$.

Plugging \eqref{eq:e_approx} into \eqref{eq:dv} and premultiplying by $\bTheta_o=\bR_o^{-1}$ yields the data model
\begin{equation}\label{eq:dv2}
\bTheta_o \tbv(t) =\bI_\mcC \bdelta(t) + [\tbp_L(t)+\gamma \tbq_L(t)].
\end{equation}
Postulating that load variations are zero-mean with known covariance matrices, the error term in the right-hand side (RHS) of \eqref{eq:dv2} becomes zero-mean with covariance matrix
\begin{equation}\label{eq:Sigma}
\bSigma:= \bSigma_p + \gamma^2 \bSigma_q + \gamma \left(\bSigma_{pq} + \bSigma_{pq}^\top\right)
\end{equation}
where $\bSigma_p:=\mathbb{E}[\tbp_L(t)\tbp^\top_L(t)]$, $\bSigma_q:=\mathbb{E}[\tbq_L(t)\tbq^\top_L(t)]$, and $\bSigma_{pq}:=\mathbb{E}[\tbp_L(t)\tbq^\top_L(t)]$.
The minimizer of \eqref{eq:id} with $\bW=\bSigma^{-1}$ becomes the best linear unbiased estimator (BLUE) of $\bTheta_o$~\cite[Th.~6.1]{kay93book}. If load variations are Gaussian, the minimizer of \eqref{eq:id} is the maximum likelihood estimate (MLE) of $\bTheta_o$.

\subsection{Identifiability analysis}\label{subsec:ia}
This section studies whether the actual topology can be uniquely recovered by probing the buses in $\mcC$. As customary in identifiability analysis, the probing data in \eqref{eq:model} are considered noiseless $(\mathbf{E}=\mathbf{0})$. In general, the true Laplacian $\bTheta_o$ may not be identifiable, i.e., the solution of \eqref{eq:id} may not be unique. Nevertheless, the next result shows that the graph associated with the recovered Laplacian $\bTheta$ shares some properties with the actual grid graph.

\begin{proposition}\label{pro:sameLS}
Let $\mcG$ be the radial graph associated with the Laplacian matrix $\bTheta_o$. Given probing data $(\tbV,\bDelta)$ where $\tbV = \bTheta_o^{-1} \bI_{\mcC} \bDelta$ and $\textrm{rank}(\bDelta)=C$, let $\bTheta$ be a solution of \eqref{eq:id} and $\mcG'$ its associated graph. Then, for every probing bus $m \in \mcC$, it holds that
\begin{align}
&\mcN^k_m(\mcG) = \mcN^k_m(\mcG'), \quad \forall k = 0,\dots, d_m \label{eq:samelevelset}\\
&d_m(\mcG) = d_m(\mcG'). \label{eq:samedeep}
\end{align}
\end{proposition}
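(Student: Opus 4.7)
The strategy is to first show that any optimizer of \eqref{eq:id} matches $\bTheta_o^{-1}$ on the columns indexed by $\mcC$, and then to read off the level-set structure at each probing bus directly from those columns via Lemma~\ref{le:entriesR2}.

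\medskip

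\textbf{Step 1 (zero residual and column matching).} Since $\bE=\mathbf{0}$ and $\bTheta_o\in\mcM\cap\mcT$, the true Laplacian is feasible for \eqref{eq:id} and attains $f(\bTheta_o)=0$. As $\bW\succ\mathbf{0}$, any minimizer $\bTheta$ must therefore satisfy $\bTheta\tbV=\bI_{\mcC}\bDelta$. Substituting $\tbV=\bR_o\bI_{\mcC}\bDelta$ and post-multiplying by a right inverse of $\bDelta$ (which exists because $\mathrm{rank}(\bDelta)=C$) yields
\begin{equation*}
\bTheta\,\bR_o\bI_{\mcC}=\bI_{\mcC},
\end{equation*}
i.e.\ $\bR\bI_{\mcC}=\bR_o\bI_{\mcC}$, where $\bR:=\bTheta^{-1}$ is well defined since $\bTheta\in\mcT$ implies $\bTheta\succ\mathbf{0}$. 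Because both $\bR$ and $\bR_o$ are symmetric, this column equality also gives row equality: for every $m\in\mcC$, the entire $m$-th row of $\bR$ coincides with the $m$-th row of $\bR_o$.

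\medskip

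\textbf{Step 2 (translating equal rows into equal level sets).} Since $\bTheta\in\mcT\cap\mcM$ is the reduced Laplacian of a tree $\mcG'$, Lemma~\ref{le:entriesR2} applies to both $\mcG$ and $\mcG'$. Part~ii) characterizes membership in a common level set of node $m$ purely through equalities of the $m$-th row of the corresponding $\bR$ matrix: two nodes $n,s$ lie in the same $\mcN_m^k(\mcG)$ iff $[R_o]_{mn}=[R_o]_{ms}$, and the analogous statement holds in $\mcG'$ with $[R]_{mn}=[R]_{ms}$. By Step~1 these two rows coincide, hence the two partitions of the remaining nodes into level-set equivalence classes are identical \emph{as unordered collections}. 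Part~iii) of Lemma~\ref{le:entriesR2} then pins down the indexing, since it asserts a strict monotone correspondence between the level index $k$ and the common row value on that level. Consequently the collections are identical \emph{index by index}, which is exactly~\eqref{eq:samelevelset}.

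\medskip

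\textbf{Step 3 (depth equality).} Each node belongs to exactly one level set of $m$, and by definition $m\in\mcN_m^{d_m(\mcG)}(\mcG)$ and $m\in\mcN_m^{d_m(\mcG')}(\mcG')$. Combining these memberships with the index-by-index equality from Step~2 forces $d_m(\mcG)=d_m(\mcG')$, establishing~\eqref{eq:samedeep}.

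\medskip

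\textbf{Anticipated obstacle.} Steps~1 and~3 are essentially bookkeeping; the delicate point is Step~2, where one must ensure that the two level-set families are not merely the same partition but are indexed consistently across $\mcG$ and $\mcG'$. The strict monotonicity in Lemma~\ref{le:entriesR2}(iii) provides exactly the canonical ordering needed, so the proof reduces cleanly to invoking that lemma twice.
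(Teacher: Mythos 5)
Your proof is correct and follows essentially the same route as the paper: zero residual plus $\rank(\bDelta)=C$ forces $\bR\bI_{\mcC}=\bR_o\bI_{\mcC}$, and Lemma~\ref{le:entriesR2} then transfers the probed rows into equal level sets and depths. Your Step~2 is in fact more careful than the paper's one-line invocation of Lemma~\ref{le:entriesR2}-(ii), since you use part~(iii) to justify that the two level-set partitions are indexed consistently, a point the paper leaves implicit.
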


\begin{proof}
Matrix $\bTheta$ is a minimizer of \eqref{eq:id} if and only if $\bTheta^{-1}\bI_{\mcC}\bDelta=\bTheta^{-1}_o\bI_{\mcC}\bDelta$. Because $\textrm{rank}(\bDelta)=C$, it follows that $\bTheta^{-1}\bI_{\mcC}=\bTheta^{-1}_o\bI_{\mcC}$, or for $\bR := \bTheta^{-1}$ that
\begin{equation*}
\bR \be_m = \bR_o \be_m, \quad \forall m\in \mcC. 
\end{equation*}
Hence, Lemma~\ref{le:entriesR2}-(ii) implies that $d_m(\mcG') = d_m(\mcG)$ and 
\begin{equation*}
\mcN^{k}_m(\mcG') = \mcN^{k}_m(\mcG)
\end{equation*}
for all $k=1,\ldots,d_m$.
\end{proof}

\begin{figure}[t]
\centering
\includegraphics[width=0.30\textwidth]{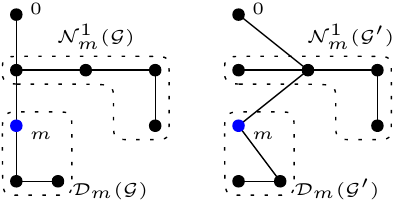}
\caption{Let $m$ be the only probing bus. The left panel depicts the actual grid topology, while the right panel depicts one of the possible minimizers of \eqref{eq:tv}. The topologies differ in the connections among buses in $\mcD_m$ or $\mcN_m^1$.}
\label{fig:LemmaLS}
\end{figure}

Proposition~\ref{pro:sameLS} ensures that probing buses are at the same depth and have the same level sets in the original graph $\mcG$ and the recovered one $\mcG'$; see Fig.~\ref{fig:LemmaLS} for an example. 

Based on the aforesaid partial results, we next study the complete identifiability of $\mcG$. Some intermediate claims proved in the appendix precede our main result. Lemma~\ref{le:levelsets} and Corollary~\ref{co:Wexample} relate ancestor nodes to level sets. An example for the former is illustrated in Fig.~\ref{fig:leaf+id}. 

\begin{figure}[t]
\centering
\includegraphics[width=0.25\textwidth]{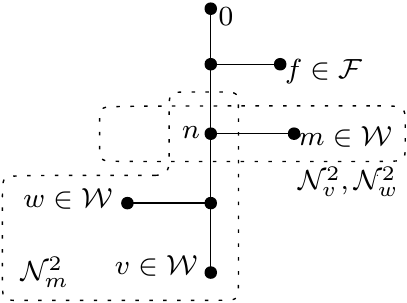}
\caption{Pictorial representation of Lemma~\ref{le:levelsets}.}
\label{fig:leaf+id}
\end{figure}

\begin{lemma}\label{le:levelsets}
In a tree graph, the node $n$ is the $k$-depth ancestor of a leaf node $m\in\mcF$, that is $n = \alpha^k_m$, if and only if there exists a subset of leaves $\mcW\subseteq \mcF$ with $m\in\mcW$ such that 
\begin{equation}\label{eq:levelsets}
\{n\}= \bigcap \limits_{w \in \mcW} \mcN^k_w.
\end{equation}
In words, node $n$ is the singleton intersection among the $k$-th level sets of all nodes in $\mcW$.
\end{lemma}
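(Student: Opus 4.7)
The statement is a biconditional, so the plan is to handle the two implications separately. The reverse direction will follow almost immediately from parts (i)--(ii) of Lemma~\ref{le:Nmk}, while the forward direction requires constructing an explicit witness set $\mcW$ of leaves whose level-$k$ sets intersect to $\{n\}$. The forward direction is where the real work lies, but the construction is natural once the geometric picture in Fig.~\ref{fig:leaf+id} is in mind.

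\textbf{Reverse direction ($\Leftarrow$).} Suppose $\mcW\subseteq\mcF$ with $m\in\mcW$ and $\{n\}=\bigcap_{w\in\mcW}\mcN^k_w$. For each $w\in\mcW$, the containment $n\in\mcN^k_w$ combined with Lemma~\ref{le:Nmk}-(ii) (applied to the pair $(w,n)$) yields $\alpha^k_n\in\mcN^k_w$. Hence $\alpha^k_n$ lies in every set being intersected, so $\alpha^k_n\in\{n\}$; that is, $\alpha^k_n=n$, meaning $n$ itself sits at depth exactly $k$. Since $m\in\mcW$ we have $n\in\mcN^k_m$, and Lemma~\ref{le:Nmk}-(i) identifies the unique depth-$k$ element of $\mcN^k_m$ as $\alpha^k_m$; therefore $n=\alpha^k_m$.

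\textbf{Forward direction ($\Rightarrow$).} Assume $n=\alpha^k_m$. If $k=d_m$, then $n=m$ is itself a leaf and Lemma~\ref{le:Nmk}-(iii) gives $\mcN^{d_m}_m=\{m\}$, so $\mcW=\{m\}$ suffices. Otherwise $k<d_m$, so $n$ has at least one child ($\alpha^{k+1}_m$). Enumerate the children of $n$ as $c_1,\ldots,c_s$ with $c_1:=\alpha^{k+1}_m$, and for each $i$ select a leaf $\ell_i$ of the subtree rooted at $c_i$, taking specifically $\ell_1:=m$ (valid since $m$ is itself a leaf belonging to $\mcD_{c_1}$). Define $\mcW:=\{\ell_1,\ldots,\ell_s\}\subseteq\mcF$, which contains $m$ by construction. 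For each $i$, we have $\alpha^k_{\ell_i}=n$ and $\alpha^{k+1}_{\ell_i}=c_i$, so the definition in \eqref{eq:levelset} gives $\mcN^k_{\ell_i}=\mcD_n\setminus\mcD_{c_i}$. Using the disjoint decomposition $\mcD_n=\{n\}\sqcup\mcD_{c_1}\sqcup\cdots\sqcup\mcD_{c_s}$, the intersection telescopes to
\begin{equation*}
\bigcap_{i=1}^s\mcN^k_{\ell_i}=\mcD_n\setminus\bigcup_{i=1}^s\mcD_{c_i}=\{n\},
\end{equation*}
which finishes the construction.

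\textbf{Main obstacle.} The principal hurdle is choosing exactly which leaves to put into $\mcW$. The crucial observation is that the sibling subtrees rooted at the children of $n$ are pairwise disjoint, so picking one leaf per child of $n$ excludes each sibling subtree from exactly one $\mcN^k_{\ell_i}$, leaving only the common node $n$ in the intersection; pairing $m$ itself with the subtree at $\alpha^{k+1}_m$ guarantees $m\in\mcW$ with no extra effort.
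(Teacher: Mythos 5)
Your proof is correct and follows essentially the same route as the paper's: the forward direction constructs a witness set of leaves beneath $n$ and telescopes $\bigcap_i\left(\mcD_n\setminus\mcD_{c_i}\right)=\{n\}$ (the paper takes \emph{all} leaves in $\mcF\cap\mcD_n$ where you take one per child of $n$ --- an immaterial difference), and the reverse direction invokes Lemma~\ref{le:Nmk}-(ii) to force $\alpha^k_n$ into the intersection, which is exactly the paper's contradiction argument phrased directly. Your explicit treatment of the $k=d_m$ edge case is a small bonus the paper glosses over.
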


\begin{corollary}\label{co:Wexample}
In a tree graph, let $n$ be a $k$-depth node and let $\mcW := \mcD_n \cap \mcF$. Then, $\{n\}= \bigcap \limits_{w \in \mcW} \mcN^k_w$.
\end{corollary}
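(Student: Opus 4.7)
My plan is to establish the set equality $\bigcap_{w\in\mcW} \mcN^k_w = \{n\}$ by a two-sided inclusion, drawing on Lemma~\ref{le:Nmk} for pointwise facts about level sets and on Lemma~\ref{le:levelsets} to close off the reverse inclusion.

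For the forward inclusion $\{n\} \subseteq \bigcap_{w\in\mcW}\mcN^k_w$, I would pick an arbitrary leaf $w \in \mcW = \mcD_n \cap \mcF$ and observe that, because $w \in \mcD_n$ and $n$ has depth $k$, the node $n$ is the $k$-depth ancestor of $w$, i.e.\ $n = \alpha^k_w$. Lemma~\ref{le:Nmk}-(i) then places $\alpha^k_w = n$ inside $\mcN^k_w$, so $n$ lies in every set appearing in the intersection.

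For the reverse inclusion, I would exploit Lemma~\ref{le:levelsets}. Since every subtree of a finite tree has at least one leaf, $\mcW = \mcD_n \cap \mcF$ is nonempty; fix any $m \in \mcW$. As above, $n = \alpha^k_m$, so Lemma~\ref{le:levelsets} furnishes some $\mcW' \subseteq \mcF$ with $m \in \mcW'$ and $\{n\} = \bigcap_{w \in \mcW'}\mcN^k_w$. The key observation is that $\mcW' \subseteq \mcW$: for any $w \in \mcW'$ the node $n$ lies in $\mcN^k_w$, and Lemma~\ref{le:Nmk}-(ii) converts this into $\alpha^k_w = \alpha^k_n = n$ (using $d_n = k$), which means $w \in \mcD_n$ and hence $w \in \mcD_n \cap \mcF = \mcW$. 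Since indexing an intersection over a smaller set only enlarges it,
\[
\bigcap_{w\in\mcW}\mcN^k_w \;\subseteq\; \bigcap_{w\in\mcW'}\mcN^k_w \;=\; \{n\},
\]
which combined with the forward inclusion gives the desired equality.

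The main obstacle I anticipate is precisely the containment $\mcW' \subseteq \mcW$; if $\mcW'$ could harbor leaves outside $\mcD_n$, the argument would collapse, but Lemma~\ref{le:Nmk}-(ii) resolves this cleanly by forcing every element of $\mcW'$ to have $n$ as its $k$-depth ancestor. The degenerate case in which $n$ is itself a leaf (so $k = d_n$, $\mcW = \{n\}$, and $\mcN^{d_n}_n = \{n\}$ by Lemma~\ref{le:Nmk}-(iii)) is dispatched directly and needs no appeal to Lemma~\ref{le:levelsets}.
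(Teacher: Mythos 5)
Your proof is correct, but it reaches the conclusion by a different route than the paper. The paper treats the corollary as an immediate byproduct of the \emph{proof} of Lemma~\ref{le:levelsets}: in the forward direction of that proof, the witness set is chosen to be exactly $\mcW=\mcF\cap\mcD_n$, and the identity $\bigcap_{w\in\mcW}\mcN^k_w=\bigcap_{w\in\mcW}\bigl(\mcD_n\setminus\mcD_{\alpha^{k+1}_w}\bigr)=\mcD_n\setminus\bigcup_{w\in\mcW}\mcD_{\alpha^{k+1}_w}=\{n\}$ is computed directly, so there is nothing left to prove. You instead use only the \emph{statement} of Lemma~\ref{le:levelsets} as a black box: it hands you some witness $\mcW'$, and you then argue via Lemma~\ref{le:Nmk}-(ii) that $\mcW'\subseteq\mcW$ (every $w\in\mcW'$ has $\alpha^k_w=n$, hence lies in $\mcD_n$), after which monotonicity of intersections closes the reverse inclusion; the forward inclusion you handle pointwise with Lemma~\ref{le:Nmk}-(i). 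Your version buys independence from the internals of the lemma's proof at the cost of the extra containment argument; the paper's version is shorter but requires noticing that the corollary's set is precisely the witness already constructed there. One small merit of your write-up is that you explicitly dispatch the degenerate case $k=d_n$ (i.e., $n$ a leaf), where the level set is $\mcD_n$ rather than a set difference --- a case the paper's chain of equalities silently glosses over.
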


Building on Lemma~\ref{le:levelsets} and Corollary~\ref{co:Wexample}, the ensuing result establishes a sufficient condition for recovering the connectivity of a tree. 

\begin{lemma}\label{le:uniqueness}
In a tree $\mcG = (\mcN, \mcE)$, the edge set $\mcE$ is uniquely characterized by the set of leaf nodes $\mcF(\mcG)\subseteq \mcN$ and the level sets $\{\mcN_w^k(\mcG)\}_{k=0}^{d_w}$ for all $w\in \mcF(\mcG)$.
\end{lemma}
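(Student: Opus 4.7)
The plan is to reconstruct the edge set $\mcE$ directly from the given leaf set $\mcF(\mcG)$ and level sets $\{\mcN_w^k(\mcG)\}_{w\in\mcF,\, k\le d_w}$, in two stages: first recover the full ancestor chain $\alpha_m^0,\alpha_m^1,\ldots,\alpha_m^{d_m}$ of every leaf $m$, and then use the observation that every edge of a rooted tree lies on at least one root-to-leaf path.

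For the first stage, I would invoke Lemma~\ref{le:levelsets}, which characterizes, for every leaf $m\in\mcF$ and every depth $k\in\{0,\ldots,d_m\}$, the $k$-depth ancestor $\alpha_m^k$ as the unique node $n$ for which there exists some $\mcW\subseteq\mcF$ with $m\in\mcW$ and $\bigcap_{w\in\mcW}\mcN_w^k=\{n\}$. Since this is a purely set-theoretic condition on the level sets we are handed, each ancestor $\alpha_m^k$ is determined solely by the given data. Note also that each depth $d_w$ is itself encoded in the data: by Lemma~\ref{le:Nmk}(iii), $d_w$ is the unique index $k$ for which $\mcN_w^k=\{w\}$, so the range of $k$ is known.

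For the second stage, I would argue that in a rooted tree every edge $(u,v)$, where $u$ is the parent of $v$, lies on some root-to-leaf path: upwards from $u$ along its unique ancestor chain to the root, and downwards from $v$ to some leaf obtained by iteratively selecting any child of the current node (a process that terminates since the subtree rooted at $v$ is finite). Consequently the edge set can be written as
\begin{equation*}
\mcE=\bigcup_{m\in\mcF}\bigcup_{k=0}^{d_m-1}\bigl\{(\alpha_m^k,\alpha_m^{k+1})\bigr\},
\end{equation*}
and combined with the first stage this expresses $\mcE$ entirely in terms of $\mcF$ and the level sets. Equivalently, if two trees on the same node set share the same leaf set and the same level sets of every leaf, then their ancestor chains agree and hence their edge sets agree.

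The substantive work has already been carried out in Lemma~\ref{le:levelsets}; what remains here is essentially a bookkeeping step, namely that root-to-leaf paths cover all edges and that consecutive ancestors along these paths are precisely the endpoints of those edges. The main obstacle, therefore, is not in this lemma itself but in being careful that every index $k$ used in the reconstruction formula is read off from the given data rather than implicitly assumed, which the identification $d_w=\{k:\mcN_w^k=\{w\}\}$ resolves.
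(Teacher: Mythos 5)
Your proof is correct, but it takes a genuinely different route from the paper's. The paper argues by contradiction: assuming a second tree $\mcG'$ with the same leaf set and the same leaf level sets but $\mcE'\neq\mcE$, it locates a subtree $\mcT_n(\mcG')$ common to both graphs whose feeding edge $(m,n)$ lies only in $\mcE'$, shows that $m$ must then have different depths in the two graphs, and finally invokes Corollary~\ref{co:Wexample} (essentially only the \emph{only if} half of Lemma~\ref{le:levelsets}) together with Lemma~\ref{le:Nmk} to contradict the assumed equality of the level sets of the leaves in $\mcD_m(\mcG')\cap\mcF(\mcG')$. You instead give a direct reconstruction: the full \emph{if and only if} of Lemma~\ref{le:levelsets} pins down every ancestor $\alpha_m^k$ of every leaf $m$ as the unique node passing a test stated purely in terms of the given data, and the identity $\mcE=\bigcup_{m\in\mcF}\bigcup_{k=0}^{d_m-1}\bigl\{(\alpha_m^k,\alpha_m^{k+1})\bigr\}$ --- valid because every edge lies on some root-to-leaf path and consecutive ancestors are adjacent --- then recovers the edge set. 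Your route is shorter and constructive, yielding an explicit procedure for rebuilding the tree rather than merely a uniqueness certificate, at the price of leaning on the converse direction of Lemma~\ref{le:levelsets}, which the paper's proof of this lemma does not need (though the paper establishes that converse anyway). Your two bookkeeping observations --- that $d_w$ is readable from the data as the unique $k$ with $\mcN_w^k=\{w\}$, and that any leaf of the subtree hanging below an edge is a leaf of $\mcG$ itself, so the root-to-leaf paths indexed by $\mcF$ really cover all of $\mcE$ --- close the only gaps I would have flagged; I see no error.
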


Using the topology recovery condition of Lemma~\ref{le:uniqueness}, our main identifiability result follows. 

\begin{theorem}\label{th:leaf+id}
Given probing data $(\tbV,\bDelta)$ where $\tbV = \bTheta_o^{-1} \bI_{\mcC} \bDelta$ and $\textrm{rank}(\bDelta)=C$, the resistive network topology is identifiable if the grid is probed at all leaf nodes, that is $\mcF\subseteq \mcC$, and voltage data are collected at all nodes.
\end{theorem}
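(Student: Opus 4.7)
My plan is to show that every minimizer $\bTheta$ of \eqref{eq:id} coincides with $\bTheta_o$, so that the topology and line resistances are both recovered. Since $\bTheta_o\in\mcM\cap\mcT$ achieves zero cost in the noiseless regime, any minimizer $\bTheta$, whose inverse I denote $\bR:=\bTheta^{-1}$, must satisfy $\bR\bI_\mcC\bDelta=\bR_o\bI_\mcC\bDelta$; the rank condition on $\bDelta$ then yields $\bR\bI_\mcC=\bR_o\bI_\mcC$, so Proposition~\ref{pro:sameLS} applies to the tree $\mcG'$ associated with $\bTheta$: for every $w\in\mcF\subseteq\mcC$, the depth $d_w(\mcG)=d_w(\mcG')$ and the level sets $\mcN_w^k(\mcG)=\mcN_w^k(\mcG')$ agree for $k=0,\dots,d_w$.

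Everything then reduces to verifying $\mcF(\mcG)=\mcF(\mcG')$, after which Lemma~\ref{le:uniqueness} immediately gives $\mcE(\mcG)=\mcE(\mcG')$ and hence $\bTheta=\bTheta_o$. The forward inclusion $\mcF(\mcG)\subseteq\mcF(\mcG')$ is essentially free: for $m\in\mcF(\mcG)$, Lemma~\ref{le:Nmk}(iii) yields $\mcN_m^{d_m}(\mcG)=\{m\}$, and by the matching of level sets I get $\mcD_m(\mcG')=\mcN_m^{d_m}(\mcG')=\{m\}$, so $m$ is a leaf of $\mcG'$ as well.

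The crux---and the main obstacle---is the reverse inclusion, since Proposition~\ref{pro:sameLS} gives no direct handle on level sets at non-probed nodes. I would argue by contradiction: suppose $v\in\mcF(\mcG')\setminus\mcF(\mcG)$. Then $v$ is internal in $\mcG$, so it admits a leaf descendant $w\in\mcD_v(\mcG)\cap\mcF$, and, setting $k:=d_v(\mcG)$, one has $v=\alpha_w^k(\mcG)$. Applying Corollary~\ref{co:Wexample} inside $\mcG$ with $\mcW:=\mcD_v(\mcG)\cap\mcF\subseteq\mcC$ gives $\{v\}=\bigcap_{w'\in\mcW}\mcN_{w'}^k(\mcG)$. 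Because every $w'\in\mcW$ is a probed leaf, its level sets are the same in both graphs, and so $\{v\}=\bigcap_{w'\in\mcW}\mcN_{w'}^k(\mcG')$ as well. Invoking Lemma~\ref{le:levelsets} inside $\mcG'$ with $\mcW\subseteq\mcF(\mcG')$ (by the forward inclusion) and the leaf $w\in\mcW$, I conclude $v=\alpha_w^k(\mcG')$; but then $w\neq v$ is a proper descendant of $v$ in $\mcG'$, contradicting $v\in\mcF(\mcG')$.

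With $\mcF(\mcG)=\mcF(\mcG')$ and matching level sets at every leaf, Lemma~\ref{le:uniqueness} delivers $\mcE(\mcG)=\mcE(\mcG')$, so the topologies coincide. The line resistances can then be read off from the corresponding off-diagonal entries of $\bPhi(\bTheta)$, completing the identifiability argument.
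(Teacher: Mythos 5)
Your overall route is the same as the paper's: reduce the noiseless fit to $\bR\bI_{\mcC}=\bR_o\bI_{\mcC}$, invoke Proposition~\ref{pro:sameLS} to match depths and level sets at every probed leaf, and then apply Lemma~\ref{le:uniqueness} to conclude that the edge sets coincide. In fact your treatment of the connectivity step is \emph{more} careful than the paper's: Lemma~\ref{le:uniqueness} requires as a hypothesis that the two trees share the same leaf set, and the paper applies it without comment, whereas you supply a correct two-inclusion argument --- the forward inclusion from $\mcD_m(\mcG')=\mcN_m^{d_m}(\mcG')=\mcN_m^{d_m}(\mcG)=\{m\}$, and the reverse inclusion via Corollary~\ref{co:Wexample} applied in $\mcG$ together with the ``if'' direction of Lemma~\ref{le:levelsets} applied in $\mcG'$. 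That sub-argument is sound and fills a real omission.

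Where you fall short is the last step. The theorem (and your own stated plan) requires $\bTheta=\bTheta_o$, i.e., that the recovered \emph{resistances} match, not just the edge set: two trees with identical connectivity but different edge weights yield different matrices $\bR$, so they are not excluded by $\mcE(\mcG)=\mcE(\mcG')$ alone. Saying the resistances ``can be read off from $\bPhi(\bTheta)$'' only restates that $\bTheta$ encodes \emph{some} weights; it does not show they equal the true ones. The paper closes this with a short telescoping argument you should add: for any leaf $m$ and any edge $\ell=(\alpha_m^k,\alpha_m^{k+1})$ on its root path, Lemma~\ref{le:entriesR2} (equivalently, \eqref{eq:entriesR1}) gives $r_\ell(\mcG')=[R]_{\alpha_m^k m}-[R]_{\alpha_m^{k+1} m}$; since the $m$-th columns of $\bR$ and $\bR_o$ agree and the ancestors $\alpha_m^k$ are identical in both graphs once connectivity is matched, this difference equals $[R_o]_{\alpha_m^k m}-[R_o]_{\alpha_m^{k+1} m}=r_\ell(\mcG)$. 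Every edge of a rooted tree lies on the root path of at least one leaf, so all resistances coincide and $\bTheta=\bTheta_o$ follows.
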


\begin{IEEEproof}
The topology is unidentifiable if there exists a $\bTheta\in \mcM \cap \mcT$ with $\bTheta\neq \bTheta_o$ satisfying $\bTheta^{-1} \bI_{\mcC} \bDelta=\bTheta_o^{-1} \bI_{\mcC} \bDelta$. Proposition~\ref{pro:sameLS} already guarantees that for every probing leaf bus $m \in \mcC$, it holds that $\mcN(\mcG)_m^k = \mcN(\mcG_o)_m^{k}$ for $k=0,\ldots,d_m$. However, Lemma~\ref{le:uniqueness} ensures that if $\mcF \subseteq \mcC$, the connectivity of $\mcG$ equals the connectivity of $\mcG_o$.

So far, we have shown that $\mcG$ exhibits the same connectivity with $\mcG_o$. Nonetheless, the recovered resistances may not agree with the actual resistances. To waive this possibility, consider the edge $\ell = (\alpha_m^k,\alpha_m^{k+1})$ for a leaf node $m$ and for a $k\in\{0,\ldots,d_m^k\}$. For this line, it holds
\begin{align*}
r_{\ell}(\mcG) & = [R]_{\alpha_m^km} - [R]_{\alpha_m^{k+1}m}\\
& = [R_o]_{\alpha_m^km} - [R_o]_{\alpha_m^{k+1}m}\\
&= r_{\ell}(\mcG_o)
\end{align*}
where the first and third equalities originate from Lemma~\ref{le:entriesR2}; and the second one follows from $\bR\be_m=\bR_o\be_m$. We have shown that the edges in $\mcG$ and $\mcG_o$ have identical weights, and hence, $\mcG = \mcG_o$.
\end{IEEEproof}

Theorem~\ref{th:leaf+id} establishes that the grid topology is identifiable if the grid is probed at all leaf nodes and voltages are collected at all nodes. Under this setup, one needs at least $T=|\mcF|$ probing actions, which can be significantly fewer than the total number of nodes $N$. Other approaches instead guarantee grid identifiability after collecting $T \geq N$ data, e.g., see \cite{Ardakanian17}. When not all leaf nodes are probed, a part of the network can still be recovered. Assume that, after removing the descendants of buses in $\mcC$ , we obtain a radial network whose leaves are probing buses. Then, by combining Proposition~\ref{pro:sameLS} and Lemma~\ref{le:levelsets}, it can be shown that edge mistakes can only occur between the descendants of probing nodes; see the example of Fig.~\ref{fig:Rem3}.

The major role of leaf nodes in grid topology learning has already been identified in~\cite{Deka4}--\cite{ParkDeka}. Data here are collected actively via probing, whereas in \cite{Deka4}--\cite{ParkDeka} by passively collecting smart meter readings on a subset of buses $\mcC$. Apart from data acquisition, the two approaches differ also in the information used. In Theorem~\ref{th:leaf+id}, the information extracted from probing is essentially $\tbV \bDelta^+ = \bR_o \bI_{\mcF}$, where $\bDelta^+$ is the pseudo-inverse of $\bDelta$. The work in \cite{Deka4}--\cite{ParkDeka} on the other hand, operates on $\bI_{\mcF}^\top\bR_o\bI_{\mcF}$ presuming the ensemble voltage-injection covariances are known. Therefore, Theorem~\ref{th:leaf+id} quantifies the advantage of knowing $\bR_o\bI_{\mcF}$ rather than $\bI_{\mcF}^\top\bR_o\bI_{\mcF}$.

Note that the fundamental limit of $T=|\mcF|$ probing actions established by Theorem~\ref{th:leaf+id} presumes noiseless data. In the presence of noise, increasing the number of probing actions can apparently improve the accuracy in estimating $\bTheta_o$. Adopting the approximation of \eqref{eq:e_approx} and postulating Gaussian $\be(t)$'s each with covariance matrix $\bSigma$~[cf.~\eqref{eq:Sigma}], the MLE of $\bTheta_o$ can be found to be
\begin{equation}\label{eq:mle}
\hat{\bTheta}_{\text{MLE}}=\bDelta\tbV^\top\left(\tbV\tbV^\top\right)^{-1}.
\end{equation}
Standard analysis ensures that the mean of $\hat{\bTheta}_{\text{MLE}}$ equals $\bTheta_o$, and the covariance of its vectorized form is
\begin{equation}\label{eq:cov}
\mathrm{Cov}[\vectorize(\hat{\bTheta}_{\text{MLE}})]=(\tbV\tbV^\top)^{-1}\otimes \bSigma
\end{equation}
where $\otimes$ denotes the Kronecker matrix product. Moreover, the estimate becomes Gaussian asymptotically in $T$; see~\cite[Ch.~7]{kay93book}. As expected, increasing $T$ beyond $C$ increases $\trace(\tbV\tbV^\top)$, which reduces the trace of the covariance in \eqref{eq:cov}, hence improving the estimation accuracy. The numerical tests of Section~\ref{sec:tests} show how the results improve with increasing $T$. Although the MLE of \eqref{eq:mle} is amenable to a statistical characterization, it relies on both grid approximations and statistical assumptions, and ignores the fact that $\bTheta_o\in \mcM\cap \mcT$. For example, matrix $\hat{\bTheta}_{\text{MLE}}$ is dense almost surely, even though $\bTheta_o$ is sparse. The estimator in \eqref{eq:id} on the other hand exploits the underlying structure, yet complicates any statistical characterization. 

\begin{figure}[t]
\centering
\includegraphics[width=0.23\textwidth]{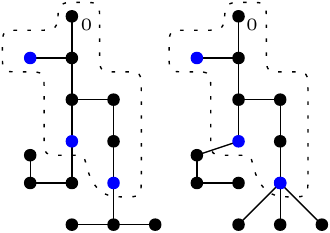}
\caption{The blue (black) nodes represent probing (non-probing) nodes. The left panel depicts the actual topology, and the right panel depicts the recovered topology. The portion of the network within the dashed line can still be perfectly recovered.}
\label{fig:Rem3}
\end{figure}

\subsection{Convex relaxation}\label{subsec:relaxid}
Albeit its objective and set $\mcM$ are convex, the optimization in \eqref{eq:id} is challenging because $\mcT$ is non-convex and open. To arrive at a practical solution, we surrogate $\mcT$ by adding two penalties in the objective of \eqref{eq:id} as detailed next. The property $\bTheta\succ \mathbf{0}$ of $\mcT$ is equivalent to enforcing a finite lower bound on $\log |\bTheta|$. Upon dualizing the two constraints comprising $\mcT$, \eqref{eq:id} can be written in its Lagrangian form as
\begin{equation}\label{eq:id2}
\min_{\bTheta\in \mcM}~f(\bTheta) + \lambda_0\|\bPhi(\bTheta)\|_{0,\text{off}}-\mu_0 \log |\bTheta|
\end{equation}
for some $\lambda_0,\mu_0>0$. The last term in the cost of \eqref{eq:id2} is convex and guarantees that the minimizer is strictly positive definite. The second term though remains non-convex. Adopting the idea of compressive sampling, the non-convex pseudo-norm $\|\bPhi(\bTheta)\|_{0,\text{off}}$ will be surrogated by its convex envelope $\|\bPhi(\bTheta)\|_{1,\text{off}}:=\sum_{m,n\neq m} \left|[\bPhi(\bTheta)]_{mn}\right|$; see also \cite{GLasso}, \cite{LiPoSc13}, \cite{KGB14}, and \cite{egilmez2016graph} for related approaches aiming to recover sparse inverse covariance or Laplacian matrices. The convex function $\|\bPhi(\bTheta)\|_{1,\text{off}}$ can be rewritten as
\begin{align*}
\|\bPhi(\bTheta)\|_{1,\text{off}}&=\trace \left(\bPhi(\bTheta)(\bI-\mathbf 1 \mathbf 1^\top)\right)\\
&= \trace\left(\bTheta(\bI-\mathbf 1 \mathbf 1^\top)\right) + 2 \1^\top \bTheta \1 \\
& = \trace\left(\bTheta(\bI-\mathbf 1 \mathbf 1^\top)\right) + 2 \trace\left(\bTheta \mathbf 1 \mathbf 1^\top\right)\\
& = \trace( \bTheta \bPi)
\end{align*}
where $\bPi:=\bI+\mathbf 1 \mathbf 1^\top$. The first equality follows from the definition of the norm and because the off-diagonal entries of $\bPhi(\bTheta)$ are non-positive [property (p3)]. The second equality from the definition of $\bPhi(\bTheta)$ in \eqref{eq:Phi}, and the third one from properties of the trace. Based on the previous discussion, the non-convex problem in \eqref{eq:id2} is surrogated by the convex
\begin{equation}\label{eq:id3}
\hbTheta:=\arg\min_{\bTheta \in \mcM} ~ \frac{1}{2} \|\bTheta \tbV - \bI_{\mcC} \bDelta\|_\bW^2 + \lambda \trace (\bTheta \bPi) - \mu \log |\bTheta|
\end{equation}
where $\lambda,\mu>0$ are tunable parameters. Since the minimizer of \eqref{eq:id3} does not necessarily belong to $\mcT$, one may apply heuristics to convert it to the reduced Laplacian of a tree graph. As suggested in~\cite{BoSch13}, one may find a minimum spanning tree for the weighted graph defined by $\bPhi(\hbTheta)$ using Kruskal's or Prim's algorithm~\cite{Ahuja}. A Laplacian $\tbTheta$ belonging to $\mcT$ can hence be found by keeping the entries of $\hbTheta$ associated with the edges found by the minimum spanning tree algorithm. Although the convex solver of \eqref{eq:id3} can run even when not all leaf nodes are probed, voltage data still need to be collected at all buses. Either way, studying the success of \eqref{eq:id3} in recovering the actual topology will not be pursued here.

\subsection{Topology identification algorithm}\label{sub:top_alg}
Albeit convex, the problem in \eqref{eq:id3} cannot be solved directly by standard conic optimization solvers due to its last term. For this reason, we choose to tackle \eqref{eq:id3} using ADMM. As a brief review, ADMM solves problems of the form~\cite{Be15}
\begin{subequations}\label{eq:ADMM}
\begin{align}
\min_{\bx \in \mcX,  \bz \in \mcZ} ~&~  h(\bx) + g(\bz)\label{eq:ADMM:cost}\\
\textrm{s.t.} ~&~\bF \bx + \bG \bz = \bc \label{eq:ADMM:con}
\end{align}
\end{subequations}
where $h(\bx)$ and $g(\bz)$ are convex functions; $\mcX$ and $\mcZ$ are convex sets; and $(\bF, \bG, \bc)$ are matrices/vectors of compatible dimensions coupling linearly $\bx$ and $\bz$. In its normalized form, ADMM solves the problem in \eqref{eq:ADMM} by iteratively repeating the next three steps for a step size $\rho>0$
\begin{align*}
&\bx^{k+1} = \arg \min_{\bx \in \mcX} ~ h(\bx) + \frac \rho 2 \| \bF \bx + \bG \bx^{k} - \bc  + \bm^{k} \|^2_2\\
&\bz^{k+1} = \arg \min_{\bz \in } ~ g(\bx) + \frac \rho 2 \| \bF \bx^{k+1} + \bG \bz - \bc  + \bm^{k} \|^2_2\\
&\bm^{k+1} = \bm^{k} + \bF \bx^{k+1} + \bG \bz^{k+1} - \bc
\end{align*}
where $\bm$ is the Lagrange multiplier vector corresponding to the linear equality constraint of \eqref{eq:ADMM:con}.

To exploit ADMM, task \eqref{eq:id3} is equivalently expressed as
\begin{subequations}\label{eq:idADMM}
\begin{align}
\min ~&~ \frac{1}{2} \|\bTheta_1 \tbV - \bI_{\mcC} \bDelta \|_\bW^2 +  \lambda \trace (\bTheta_1 \bPi) - \mu \log |\bTheta_2| \label{eq:idADMM_cost}\\
\textrm{over}~&~\bTheta_1, \bTheta_2\succeq \bZero, \bTheta_3, \bTheta_4\\
\textrm{s.t.} ~&~ \bTheta_3 \in \mathcal S(\mathbf \Gamma) \\
~&~ \bTheta_4 \in \mathcal S_0(\mathbf \Gamma) \\
~&~ \bTheta_1 = \bTheta_2,~ \bTheta_1 = \bTheta_3, ~ \bTheta_1 = \bTheta_4 \label{eq:idADMM_con4}
\end{align}
\end{subequations}
where the original variable $\bTheta$ is replicated in four copies $\{\bTheta_i\}_{i=1}^4$, each one handling a different constraint or part of the cost function. This replication was necessary in order to yield efficient updates for each copy as will be described later. The variables in \eqref{eq:idADMM} are partitioned into $\bTheta_1$ which is mapped to the $\bx$ variable in the general ADMM form, and $\{\bTheta_2,\bTheta_3,\bTheta_4\}$ which correspond to the $\bz$ ADMM variable. Let $\bM_{2},\bM_3$, and $\bM_4$ be the Lagrange multipliers associated with the constraints in \eqref{eq:idADMM_con4}. 

Then, the $\bx$-update of ADMM for finding $\bTheta_1^{k+1}$ entails minimizing the convex quadratic cost $\frac{1}{2} \|\bTheta_1 \tbV - \bI_{\mcC} \bDelta \|_\bW^2 + \lambda \trace (\bTheta_1 \bPi) + \frac \rho 6 \|3 \bTheta_1 - \bTheta_2^k - \bTheta_3^k - \bTheta_4^k + \bM_{2}^{k} + \bM_3^{k} + \bM_4^{k} \|^2_F$. Its solution can be found by simply setting its gradient to zero to get the system of linear equations
\begin{equation}\label{eq:sylvester1}
\bW\bTheta_1^{k+1} \tbV\tbV^\top + 3 \rho \bTheta_1^{k+1} = \bC^k
\end{equation}
where $\bC^k:=\bW \bI_{\mcC} \bDelta\tbV^\top - \lambda  \bPi -\rho(\bM_{2}^k + \bM_3^k + \bM_4^k - \bTheta_2^{k} - \bTheta_3^{k} - \bTheta_4^{k})$. Since $\bW$ is full-rank, the solution to \eqref{eq:sylvester1} can be efficiently found by solving the Sylvester equation~\cite[Ch.~2.4.4]{horn1990matrix}
\begin{equation*}
\bTheta_1^{k+1} \tbV \tbV^\top + 3 \rho \bW^{-1} \bTheta_1^{k+1}  = \bW^{-1}\bC^k.
\end{equation*}

The $\bz$-update of ADMM computes $\{\bTheta_2^{k+1},\bTheta_3^{k+1},\bTheta_4^{k+1}\}$ given $\bTheta_1^{k+1}$ and the multipliers $\{\bM_2^k,\bM_3^k,\bM_4^k\}$. Due to the way the variable copies have been selected and partitioned, the optimization involved in the $\bz$-update step of ADMM decouples over the three primal variables. In particular, variable $\bTheta_2$ is updated by solving the problem
\begin{equation*}
\bTheta_2^{k+1}  := \arg \min_{\bTheta_2\succeq \mathbf{0}}~ \frac{\rho}{2} \| \bTheta_1^{k+1} - \bTheta_2 + \bM_3^{k} \|_F^2  - \mu \log |\bTheta_2|
\end{equation*}
whose minimizer can be found in closed form as follows: If $\mathbf U\mathbf \Lambda\mathbf U^\top$ is the eigenvalue decomposition of $(\bTheta_1^{k+1} + \bM_3^{k} + (\bTheta_1^{k+1})^\top + (\bM_3^{k})^\top)/2$, then~\cite[Lemma~1]{KGB14}
\begin{equation*}
\bTheta_2^{k+1} = \frac{1}{2} \mathbf U \left( \mathbf \Lambda + \left(\mathbf \Lambda^2 +  \tfrac {4\mu}{\rho} \bI \right)^{1/2}\right) \mathbf U^\top.
\end{equation*}

Variable $\bTheta_3^{k+1}$ is obtained as the minimizer of 
\begin{equation}\label{eq:ADMM_s2}
\bTheta_3^{k+1}  :=  \arg \min_{\bTheta_3 \in \mathcal S(\mathbf \Gamma)}  \| \bTheta_1^{k+1} - \bTheta_3 + \bM_{2}^{k} \|_F^2
\end{equation}
which can be expressed in closed form as
\begin{equation*}
[\Theta_3]_{mn}^{k+1}  =  \left\{
\begin{array}{ll}
0&\text{,~if~}\Gamma_{mn} = 0 \\
\left[ [\Theta_1]_{mn}^{k+1} + [M_2]_{mn}^{k}\right]_{-}& \text{,~if~} \Gamma_{mn} \neq 0, m \neq n\\
{\Theta_1}_{mn}^{k+1} + M_{2,mn}^{k} & \text{,~if~}m = n
\end{array} \right.
\end{equation*}
where operator $[x]_{-}$ is defined as $[x]_{-}:=\min\{x,0\}$.

Variable $\bTheta_4$ can be computed as the solution of
\begin{align}\label{eq:s4}
\bTheta_4^{k+1}  := \arg \min_{\bTheta_4} & ~  \| \bTheta_1^{k+1} - \bTheta_4 + \bM_4^{k} \|_F^2 \\
\textrm{s.t.} & ~ \bTheta_4 \in \mcS_0(\mathbf \Gamma).\nonumber
\end{align}
The minimization in \eqref{eq:s4} is separable over the rows of $\bTheta_4$. Each row can be updated in closed form as asserted by the next lemma, which follows from the related optimality conditions.

\begin{lemma}\label{le:s4}
The projections of vector $\by\in\mathbb{R}^N$ onto the halfspace $\1^\top\mathbf{x}\geq 0$ and the subspace $\1^\top\mathbf{x} = 0$, that is
\begin{align*}
&\hat{\bx}_h:=\arg\min_{\bx}\left\{\|\bx-\by\|_2^2:~\1^\top\mathbf{x}\geq 0\right\}\\
&\hat{\bx}_s:=\arg\min_{\bx}\left\{\|\bx-\by\|_2^2:~\1^\top\mathbf{x} = 0\right\}
\end{align*}
admit respectively the closed-form solutions 
\begin{align*}
&\hat{\bx}_h =  \by -\frac{1}{N}\1 \left[\1^\top \by\right]_{-}~\text{and}~
&\hat{\bx}_s = \by -\frac{1}{N}\1 \left[\1^\top \by\right].
\end{align*}
\end{lemma}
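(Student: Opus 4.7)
The plan is to handle the two projections separately using Lagrange multipliers / KKT conditions, and then verify that each candidate formula satisfies both the feasibility and stationarity conditions. Both minimizations are strictly convex quadratic programs over affine / halfspace sets, so the KKT conditions are necessary and sufficient, and the minimizer is unique.

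For the subspace projection $\hat\bx_s$, I would form the Lagrangian $L(\bx,\nu)=\|\bx-\by\|_2^2+\nu\,\1^\top\bx$, take the gradient in $\bx$ to obtain the stationarity condition $\bx=\by-\tfrac{\nu}{2}\1$, and enforce $\1^\top\bx=0$ to solve $\nu=\tfrac{2}{N}\1^\top\by$. Substituting back yields $\hat\bx_s=\by-\tfrac{1}{N}\1(\1^\top\by)$, matching the stated formula. This is a one-line calculation once the Lagrangian is written down.

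For the halfspace projection $\hat\bx_h$, I would again form the Lagrangian $L(\bx,\nu)=\|\bx-\by\|_2^2-\nu\,\1^\top\bx$ with multiplier $\nu\ge 0$, and write the KKT system: stationarity $\bx=\by+\tfrac{\nu}{2}\1$, primal feasibility $\1^\top\bx\ge 0$, dual feasibility $\nu\ge 0$, and complementary slackness $\nu\,\1^\top\bx=0$. Then I would split into two cases. If $\1^\top\by\ge 0$, take $\nu=0$, so $\hat\bx_h=\by$ and $[\1^\top\by]_{-}=0$, consistent with the stated formula. If $\1^\top\by<0$, complementary slackness forces $\1^\top\bx=0$, which together with stationarity gives $\nu=-\tfrac{2}{N}\1^\top\by>0$, and hence $\hat\bx_h=\by-\tfrac{1}{N}\1(\1^\top\by)$; since $[\1^\top\by]_{-}=\1^\top\by$ in this case, the formula again matches. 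Unifying the two cases via the $[\cdot]_{-}$ operator gives the closed form stated in the lemma.

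There is no real obstacle here; the only subtlety is the case split driven by complementary slackness in the halfspace problem, which the $[\,\cdot\,]_{-}$ operator neatly absorbs. Strict convexity of $\|\bx-\by\|_2^2$ plus convexity of the feasible sets guarantees the KKT point found is the unique global minimizer, completing the proof.
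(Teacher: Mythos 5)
Your proof is correct and follows exactly the route the paper intends: the paper gives no explicit derivation, stating only that the lemma ``follows from the related optimality conditions,'' and your KKT/Lagrange-multiplier argument (with the case split on the sign of $\1^\top\by$ absorbed by the $[\cdot]_{-}$ operator) is precisely the standard filling-in of those conditions. Nothing is missing.
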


Finally, the Lagrange multipliers are updated according to
\begin{align}
&\bM_{2}^{k+1} = \bM_{2}^{k} + \bTheta_1^{k+1} - \bTheta_2^{k+1} \notag\\
&\bM_3^{k+1} = \bM_3^{k} + \bTheta_1^{k+1} - \bTheta_3^{k+1} \notag\\
&\bM_4^{k+1} = \bM_4^{k} + \bTheta_1^{k+1} - \bTheta_4^{k+1}. \label{eq:ADMM_s5_cf}
\end{align}

\section{Topology Verification}\label{sec:verification}
Albeit typically operated as radial, power distribution networks are structurally meshed for reliability and maintenance. Moreover, grids are frequently reconfigured to improve voltage profiles and/or minimize losses~\cite{BW3}. The energized lines $\mcL$ are chosen from the set of existing lines denoted as $\mcL_e$ with $\mcL \subset \mcL_e$ and $|\mcL_e|=L_e$. The actual grid configuration is oftentimes unknown to the system operator, e.g., due to unreported automatic reconfigurations \cite{TalDeMate2017}. This section explores how grid probing can be used for topology verification. 

\subsection{Problem formulation}\label{subsec:problem-ver}
Given probing sequences, the recorded voltages, the existing line set $\mcL$, and line impedances, the goal of topology verification is to find which lines are energized. A line configuration can be uniquely encoded by an $L_e$--length vector $\bb$, whose entry $b_\ell$ is one if line $\ell$ is energized ($\ell \in \mcL$); and zero, otherwise. Given the line configuration $\bb$, the induced ancestor, descendant, and level sets of node $m$ will be denoted as $\mcA_m(\bb)$, $\mcD_m(\bb)$, and $\{\mcN_m^k(\bb)\}_{k=1}^{d_m}$, respectively. In addition, the related reduced Laplacian matrix can be written as
\begin{equation}\label{eq:theta(b)}
\bTheta(\bb) = \bA^\top \diag(\bb)\diag^{-1}(\br)\bA
\end{equation}
where $\bA\in \mathbb{R}_{L_e\times N}$ is the reduced incidence matrix augmented to incorporate all lines in $\mcL_e$.

Upon probing the grid with active power injections $\bI_{\mcC} \bDelta$ and recording the voltage perturbations $\tbV$, the task of topology verification can be posed as
\begin{subequations}\label{eq:tv}
\begin{align}
\hat \bb := \arg \min_\bb ~&~ \|\bTheta(\bb) \tbV - \bI_{\mcC} \bDelta\|_\bW^2 \label{eq:tv_cost} \\
\textrm{s.t.} ~&~ \bb \in \{0,1\}^{L_e} \label{eq:tv_c1}\\
~&~ \mathbf{1}^\top \bb=N \label{eq:tv_c2}\\
~&~ \bTheta(\bb) \succ \mathbf 0 \label{eq:tv_c3}.
\end{align}
\end{subequations}
Under the assumptions described in \eqref{eq:e_approx}--\eqref{eq:Sigma}, the minimizer of \eqref{eq:tv} is in fact the maximum likelihood detector of the true line status vector $\bb_o$. In a grid with $N+1$ nodes, the constraints in \eqref{eq:tv} enforce radial structure. To see this, note that the graph induced by any $N$ edges not forming a tree is not a connected graph. The corresponding reduced incidence matrix $\bA(\bb)$ is column-rank deficient, and so $\bTheta(\bb)$ becomes singular. Contrasting \eqref{eq:id} to \eqref{eq:tv}, the latter problem recovers line statuses while knowing resistances, whereas \eqref{eq:id} recovers jointly the topology and line resistances.

\subsection{Verifiability analysis}\label{subsec:va}
A pertinent question is whether probing the grid via a set of buses $\mcC$, its topology can be uniquely recovered by \eqref{eq:tv}.

\begin{corollary}\label{pro:leaf+ver}
Given probing data $(\tbV,\bDelta)$ where $\tbV = \bR(\bb_o) \bI_{\mcC} \bDelta$ and $\textrm{rank}(\bDelta)=C$, the grid topology is verifiable by solving the problem in \eqref{eq:tv} if the grid is probed at all leaf nodes, that is $\mcF(\bb_o) \subseteq \mathcal C$.
\end{corollary}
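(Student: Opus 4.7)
The plan is to recognize that the topology verification problem (eq:tv) is a restricted version of the topology identification problem (eq:id), and then invoke Theorem~\ref{th:leaf+id} essentially as a black box. Specifically, I will argue that every feasible $\bb$ in (eq:tv) induces a matrix $\bTheta(\bb)$ lying in the admissible set $\mcM\cap\mcT$ from the identification problem, so uniqueness of the optimal $\bTheta$ there forces uniqueness of the optimal $\bb$ here.

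First, I would show that the true line vector $\bb_o$ achieves zero objective in \eqref{eq:tv}: since $\tbV=\bR(\bb_o)\bI_\mcC\bDelta=\bTheta(\bb_o)^{-1}\bI_\mcC\bDelta$, the residual $\bTheta(\bb_o)\tbV-\bI_\mcC\bDelta$ vanishes. Thus the optimal value of \eqref{eq:tv} is zero and any minimizer $\hat\bb$ satisfies $\bTheta(\hat\bb)\tbV=\bI_\mcC\bDelta$, which together with $\rank(\bDelta)=C$ gives $\bTheta(\hat\bb)^{-1}\bI_\mcC=\bTheta(\bb_o)^{-1}\bI_\mcC$, i.e.\ $\bR(\hat\bb)\bI_\mcC=\bR(\bb_o)\bI_\mcC$.

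Next I would verify that any $\bb$ satisfying (\ref{eq:tv_c1})--(\ref{eq:tv_c3}) produces a $\bTheta(\bb)\in\mcM\cap\mcT$, when we instantiate the prior matrix $\tbGamma$ to mark precisely the edges of $\mcL_e$. The definition \eqref{eq:theta(b)} makes $\bTheta(\bb)$ symmetric with off-diagonal entries $-b_\ell/r_\ell\leq 0$ on edges in $\mcL_e$ and zero elsewhere, so $\bTheta(\bb)\in\mcS(\tbGamma)\cap\mcS_0(\tbGamma)$ and hence $\bTheta(\bb)\in\mcM$. The count $\1^\top\bb=N$ gives exactly $2N$ nonzero off-diagonal entries in $\bPhi(\bTheta(\bb))$, and $\bTheta(\bb)\succ\bZero$ is imposed directly, so $\bTheta(\bb)\in\mcT$ as well.

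With these two ingredients, $\bTheta(\hat\bb)$ is a feasible point of the identification problem \eqref{eq:id} attaining objective value zero. Since the hypothesis $\mcF(\bb_o)\subseteq\mcC$ is exactly the sufficient condition of Theorem~\ref{th:leaf+id}, that theorem guarantees a \emph{unique} such minimizer, namely $\bTheta(\hat\bb)=\bTheta(\bb_o)$. Finally, the map $\bb\mapsto\bTheta(\bb)$ is injective: by \eqref{eq:theta(b)} the off-diagonal entry of $\bTheta(\bb)$ on edge $\ell=(m,n)\in\mcL_e$ equals $-b_\ell/r_\ell$, so $b_\ell$ is read off directly from $\bTheta(\bb)$. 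Therefore $\hat\bb=\bb_o$ and the minimizer of \eqref{eq:tv} is unique. The only place requiring any real care is confirming the feasible-set inclusion into $\mcM\cap\mcT$ (and in particular the sparsity count matching the tree condition); once that bookkeeping is in place the result is an immediate corollary of Theorem~\ref{th:leaf+id}.
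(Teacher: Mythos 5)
Your proposal is correct and is essentially the paper's own argument: the paper proves the corollary simply by asserting that it follows from Theorem~\ref{th:leaf+id}, and your write-up supplies exactly the intended bookkeeping (feasible $\bb$'s map into $\mcM\cap\mcT$, zero residual forces $\bTheta(\hat\bb)=\bTheta(\bb_o)$ by the theorem's identifiability, and $\bb\mapsto\bTheta(\bb)$ is injective). The only cosmetic remark is that for lines incident to the substation the status $b_\ell$ is read off from the row sums $\be_n^\top\bTheta(\bb)\1$ (equivalently from $\bPhi(\bTheta(\bb))$) rather than from an off-diagonal entry of the reduced Laplacian, which does not affect the conclusion.
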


Corollary~\ref{pro:leaf+ver} follows from Theorem~\ref{th:leaf+id}, and establishes that if the grid is probed at all candidate leaf nodes, its topology is verifiable. If $\mcF(\bb_o) \nsubseteq \mcC$, problem~\eqref{eq:tv} may have multiple minimizers. Nonetheless, if additional \emph{a priori} knowledge is exploited, the topology may still be identifiable as quantified by the following result, built upon the next mild condition.

\begin{assumption}\label{ass:diff_resistances}
All lines $\ell\in \mcL_e$ have distinct resistances. 
\end{assumption}

\begin{theorem}\label{th:probe}
Under Assumption~\ref{ass:diff_resistances}, given noiseless probing data $(\tbV,\bDelta)$ where $\tbV = \bR(\bb_o) \bI_{\mcC} \bDelta$ and $\textrm{rank}(\bDelta)=C$, a line status vector $\bb$ is a minimizer of \eqref{eq:tv} if and only if it satisfies 
\begin{subequations}\label{eq:probe}
\begin{align}
&\mcA_m(\bb) = \mcA_m(\bb_o) \label{eq:probe:a}\\
&\mcN^k_m(\bb) = \mcN^k_m(\bb_o), \quad k=0,\ldots,d_m\label{eq:probe:b}
\end{align}
\end{subequations}
for every probing bus $m\in \mcC$.
\end{theorem}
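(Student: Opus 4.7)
My plan is to first reduce Theorem~\ref{th:probe} to a statement about column equality of the inverse Laplacian matrices. Since the data are noiseless and $\bb_o$ is feasible and attains zero cost, $\bb$ is a minimizer if and only if $\|\bTheta(\bb)\tbV-\bI_\mcC\bDelta\|_\bW^2=0$, i.e., $\bTheta(\bb)\bR(\bb_o)\bI_\mcC\bDelta=\bI_\mcC\bDelta$. Because $\bDelta$ has full row rank and $\bTheta(\bb)$ is invertible (from \eqref{eq:tv_c3}), this is equivalent to
\begin{equation*}
\bR(\bb)\be_m = \bR(\bb_o)\be_m\qquad\text{for every } m\in\mcC.
\end{equation*}
Thus the theorem reduces to showing that the probing columns of $\bR(\bb)$ and $\bR(\bb_o)$ coincide if and only if \eqref{eq:probe} holds.

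For the sufficiency direction (\eqref{eq:probe} $\Rightarrow$ minimizer), I would invoke the explicit entry formula \eqref{eq:entriesR1}. For any probing bus $m\in\mcC$ and any node $n$, there is a unique $k$ with $n\in\mcN_m^k(\bb)$, and the least common ancestor of $m$ and $n$ in the graph induced by $\bb$ is $\alpha_m^k(\bb)$. Hence $[R(\bb)]_{mn}$ equals the sum of resistances on the path from the substation to $\alpha_m^k(\bb)$, which is indexed by the ancestor edges $(\alpha_m^{j-1}(\bb),\alpha_m^j(\bb))$ for $j=1,\dots,k$. Condition \eqref{eq:probe:a} ensures these ancestor edges match those of $\bb_o$, and condition \eqref{eq:probe:b} ensures $n$ sits in the same level set $\mcN_m^k(\bb_o)$. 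Therefore the summations, and hence $[R(\bb)]_{mn}$ and $[R(\bb_o)]_{mn}$, agree for every $n$.

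For the necessity direction ($\bb$ minimizer $\Rightarrow$ \eqref{eq:probe}), condition \eqref{eq:probe:b} follows at once from Lemma~\ref{le:entriesR2}-(ii) applied to both $\bR(\bb)$ and $\bR(\bb_o)$: since their $m$-th columns coincide, two nodes sit in a common level set of $m$ under $\bb$ iff they do under $\bb_o$. The nontrivial part is \eqref{eq:probe:a}, which needs Assumption~\ref{ass:diff_resistances}. I would proceed inductively on $k$. By definition $\alpha_m^0(\bb)=\alpha_m^0(\bb_o)=0$. For the step, suppose $\alpha_m^k(\bb)=\alpha_m^k(\bb_o)$. Pick any $n\in\mcN_m^k(\bb)=\mcN_m^k(\bb_o)$ and any $s\in\mcN_m^{k+1}(\bb)=\mcN_m^{k+1}(\bb_o)$. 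By Lemma~\ref{le:entriesR2}-(iii), the quantity $[R(\bb)]_{mn}-[R(\bb)]_{ms}$ equals the resistance $r_{\ell(\bb)}$ of the edge $\ell(\bb)=(\alpha_m^k(\bb),\alpha_m^{k+1}(\bb))$, and similarly the same difference evaluated on $\bb_o$ equals $r_{\ell(\bb_o)}$. Equality of the $m$-th columns gives $r_{\ell(\bb)}=r_{\ell(\bb_o)}$, and Assumption~\ref{ass:diff_resistances} then forces $\ell(\bb)=\ell(\bb_o)$ as edges in $\mcL_e$. Combined with the inductive hypothesis $\alpha_m^k(\bb)=\alpha_m^k(\bb_o)$, this pins down $\alpha_m^{k+1}(\bb)=\alpha_m^{k+1}(\bb_o)$, completing the induction.

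The main obstacle is precisely this induction step, because the sets $\mcN_m^k$ alone do not identify the depth-$k$ ancestor (Lemma~\ref{le:Nmk}-(i) requires knowing the underlying graph to single out the depth-$k$ node). Assumption~\ref{ass:diff_resistances} is what makes the argument go through: it converts the equality of numerical resistance values into equality of edges, which is then bootstrapped along the ancestor chain. A minor bookkeeping point to watch is that $\bb$ must itself induce a radial tree in order for $\alpha_m^{k+1}(\bb)$ to be well-defined and for Lemma~\ref{le:entriesR2} to apply; this is guaranteed because any minimizer of \eqref{eq:tv} satisfies \eqref{eq:tv_c1}--\eqref{eq:tv_c3}, and conversely any $\bb$ meeting \eqref{eq:probe} for all $m\in\mcC$ must already be a feasible tree (else it would not be considered a candidate minimizer in the first place).
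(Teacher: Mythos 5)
Your proposal is correct and follows essentially the same route as the paper's proof: reduce optimality to $\bR(\bb)\be_m=\bR(\bb_o)\be_m$ via the full row rank of $\bDelta$, obtain \eqref{eq:probe:b} from Lemma~\ref{le:entriesR2}-(ii), obtain \eqref{eq:probe:a} from Lemma~\ref{le:entriesR2}-(iii) together with Assumption~\ref{ass:diff_resistances}, and verify sufficiency through the entry formula \eqref{eq:entriesR1}. The only difference is that you make the bootstrapping along the ancestor chain an explicit induction (using edge identity plus the shared endpoint to pin down $\alpha_m^{k+1}$), which the paper leaves implicit; this is a welcome clarification rather than a different argument.
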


\begin{figure}[t]
\centering
\includegraphics[width=0.4\textwidth]{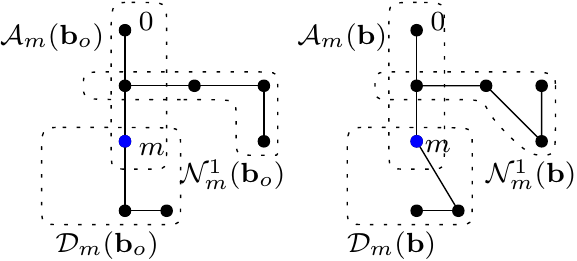}
\caption{Let $m$ be the only probing bus. The left panel shows the actual grid topology, while the right panel depicts one of the possible minimizers of \eqref{eq:tv}. The path between $m$ and the root $0$ is the same in both configurations. The topologies differ only in the connections among buses in $\mcD_m$ or $\mcN_m^1$.}\label{fig:prop3a}
\end{figure}

\begin{figure}[t]
\centering
\includegraphics[width=0.38\textwidth]{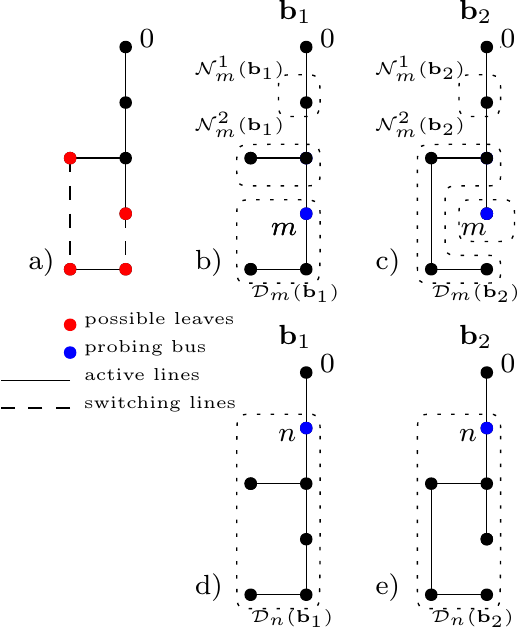}
\caption{Pictorial representation of Example~\ref{ex:verifibiality}.}\label{fig:prop3b}
\end{figure}

Theorem~\ref{th:probe} implies that every solution of \eqref{eq:tv} is such that the paths between the probing nodes and the substation, and the level sets are correctly recovered. See Figure~\ref{fig:prop3a} for a pictorial explanation and compare it with Fig.~\ref{fig:LemmaLS}. Knowing the line infrastructure can help reconstructing the unknown grid topology even if not all leaf nodes are probed. For instance, let $m$ be a non-leaf probing node. It may happen that, given a line status $\bb$, the connections among nodes in $\mcN^k_m(\bb)$ can be uniquely determined, e.g., when there are no switching lines connecting nodes in such level set. As a result, the number of probing buses can be dramatically reduced, as shown in the ensuing example.

\begin{example}
\label{ex:verifibiality}
Consider the grid of Fig.~\ref{fig:prop3b}a) having two possible radial configurations, $\bb_1$ and $\bb_2$. Corollary~\ref{pro:leaf+ver} guarantees verifiability if all four possible leaf nodes are probed. However, the topology can be detected even if the grid is probed only at bus $m$: Assume $\bb_1$ is the true topology. Fig.~\ref{fig:prop3b}b) and Fig.~\ref{fig:prop3b}c) show that $\mcD_m(\bb_1) \neq \mcD_m(\bb_2)$ and $\mcN^2_m(\bb_1) \neq \mcN^2_m(\bb_2)$. Then, Th.~\ref{th:probe} states that $f(\bb_1) \neq f(\bb_2)$, and thus $\bb_1$ is the unique solution of \eqref{eq:tv}. On the other hand, when $n$ is the only probing bus, the network is not verifiable: Fig.~\ref{fig:prop3b}d) and Fig.~\ref{fig:prop3b}e) show that $\mcD_n(\bb_1) = \mcD_n(\bb_2)$. Hence, from Th.~\ref{th:probe} it holds that $f(\bb_1) = f(\bb_2)$, so $\bb_1 $ and $\bb_2$ are indistinguishable.
\end{example}

\subsection{Verification algorithms}\label{sub:ver_alg}
Solving \eqref{eq:tv} is non-trivial since it is a non-convex problem. An approximate line status vector can be found by considering the surrogate convex problem
\begin{align}\label{eq:tv_rel}
\cbb:=\arg\min_\bb ~& ~ \tfrac 1 2 \| \bTheta(\bb) \tbV - \bI_{\mcC} \bDelta \|_\bW^2 - \mu \log |\bTheta(\bb)|\\
\textrm{s.t.} ~&~\bb \in [0,1]^{N},~\mathbf{1}^\top \bb=N.\nonumber
\end{align}
Comparing the optimization problems in \eqref{eq:tv} and \eqref{eq:tv_rel}, the set $\{0,1\}^{N}$ has been relaxed by its convex hull. Moreover, the constraint \eqref{eq:tv_c3} was substituted by the $- \mu \log |\bTheta(\bb)|$ term in \eqref{eq:tv_rel}. The latter acts as a barrier function keeping $\bTheta(\bb)$ within the positive definite matrix cone and away from singularity.

Since \eqref{eq:tv_rel} does not comply with the standard form of a conic problem, it can be handled by a projected gradient descent (PGD) scheme. Upon initializing $\bb$ at some $\bb^0$, the $k$-th PGD iteration reads
\begin{align}\label{eq:PGD}
\bb^{k+1}:=\arg\min_{\bb}&~\|\bb-\bb^k+\nu \bg(\bb^k)\|_2^2\\
\textrm{s.t.}&~ \bb \in [0,1]^{N},~\mathbf{1}^\top \bb=N\nonumber
\end{align}
where $\nu>0$ is a step size and $\bg(\bb^k)$ is the gradient of the objective in \eqref{eq:tv_rel} evaluated at $\bb^k$. Using rules of matrix differentiation, the $\ell$-th entry of $\bg(\bb)$ is computed as
\begin{equation*}
[\bg(\bb)]_{\ell}=\ba_\ell^\top \left[\tbV  (\bTheta(\bb) \tbV - \bI_{\mcC} \bDelta)^\top \bW - \mu \bTheta^{-1}(\bb)\right] \ba_\ell/r_\ell
\end{equation*}
where $\ba_\ell^\top$ is the $\ell$-th row of $\bA$. The iterations in \eqref{eq:PGD} are guaranteed to converge to a minimizer of \eqref{eq:tv_rel} for a sufficiently small $\nu$~\cite[Prop. 6.1.3]{Be15}. The projection step in \eqref{eq:PGD} can be handled either as a generic linearly-constrained convex quadratic program; by the lambda iteration method~\cite[Sec.~5.2.4]{ExpConCanBook}; or by dual ascent upon dualizing the constraint $\mathbf{1}^\top \bb=N$. However, the minimizer of \eqref{eq:tv_rel} may not lie in the original non-convex set $\{0,1\}^N$. A feasible vector $\tbb$ can be heuristically obtained either by selecting the lines corresponding to the largest $N$ entries of $\cbb$, or by finding the minimum spanning tree on a graph having $\cbb$ as edge weights.

\section{Numerical Tests}\label{sec:tests}
The novel feeder processing schemes were validated on the IEEE 13- and 37-bus feeders~\cite{Kersting}, converted to their single-phase equivalents~\cite{CaKe2017}. Additional lines were added to both feeders as shown in Fig.~\ref{fig:ieee37}, so that the 13-bus (37-bus) testbed could operate under 7 (21) distinct radial topologies. Regarding loads, active load profiles were generated by adding a zero-mean Gaussian-distributed variation to the benchmark value; its standard deviation was 0.067 times the load nominal value. For simplicity, a lagging power factor of 0.95 was simulated, and no prior information on line statuses was assumed $(\tbGamma = \mathbf 1 \mathbf 1^\top)$.

Probing and data collection were performed on a per-second basis. Since our approaches operate in a batch manner, the voltage readings from each bus can be communicated all together after all $T$ probing actions. Probing buses were equipped with inverters having the same rating as the related load. Unless otherwise stated, the buses used for probing are indicated in Fig.~\ref{fig:ieee37}. Although our approaches rely on the approximate grid model of \eqref{eq:model}, voltages were calculated using the full ac grid model throughout our tests. Probing actions was performed \emph{asynchronously} using the probing matrix $\bDelta_2= \diag(\{\delta_m\})\otimes [+1~-1]$ explained after \eqref{eq:dV}.

\begin{figure}[t]
\centering	
\includegraphics[width=0.27\textwidth]{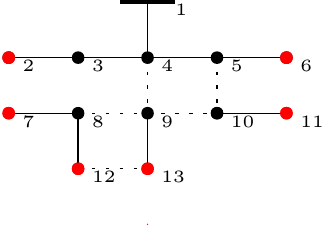}
\includegraphics[width=0.38\textwidth]{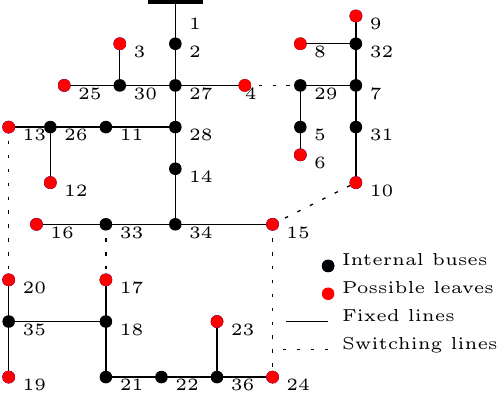}
\caption{The IEEE 13-bus (top) and 37-bus (bottom) feeders with extra lines.}
\label{fig:ieee37}
\end{figure}
 
The topology identification task of \eqref{eq:id3} was solved for different levels of measurement noise and values of $\lambda$ on the 13 IEEE bus test feeder, in the case in which each agent perform a single probing action. Upon experimentation, the remaining parameters were set to $\rho = 1$, $\mu=1$, and $\bW = \bI$. Actual voltage magnitudes were corrupted by zero-mean Gaussian noise whose $3\sigma$ deviation matched the desired level of relative measurement noise: For example, if the actual voltage is 1 pu and the accuracy is 1\%, the maximum value of noise is 1.01 and this value was mapped to the $3\sigma$ deviation of the Gaussian noise. Tree Laplacian matrices $\tbTheta$ were obtained by feeding $\hbTheta$ to Kruskal's algorithm. Figure~\ref{fig:emse} reports the relative mean square error (RMSE) of $\tbTheta$, that is $\frac{\| \tbTheta - \bTheta_o\|_F}{\|\bTheta_o\|_F}$, for varying noise variances. As expected, the RMSE increases with the noise variance, while the best performance is attained for $\lambda = 0.005$. Figure~\ref{fig:lapl} depicts the actual Laplacian matrix $\bTheta_o$ and the recovered $\tbTheta$ for $\lambda= 0.005$.

\begin{figure}[t]
\centering	
\includegraphics[width=0.5\textwidth]{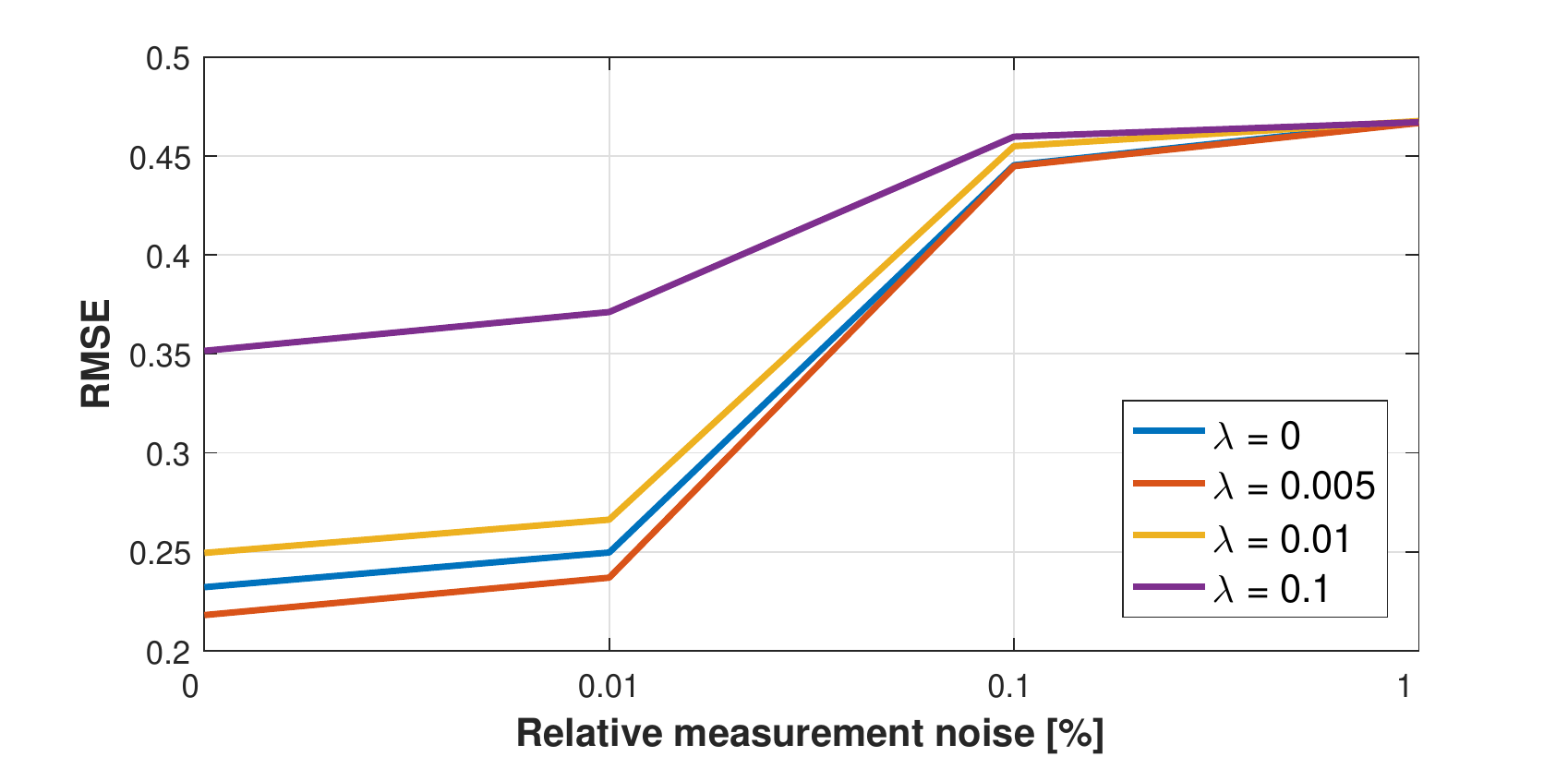}
\caption{Relative mean squared error (RMSE) for $\tbTheta$.}
\label{fig:emse}
\end{figure}

\begin{figure}[t]
\centering	
\includegraphics[width=0.47\textwidth]{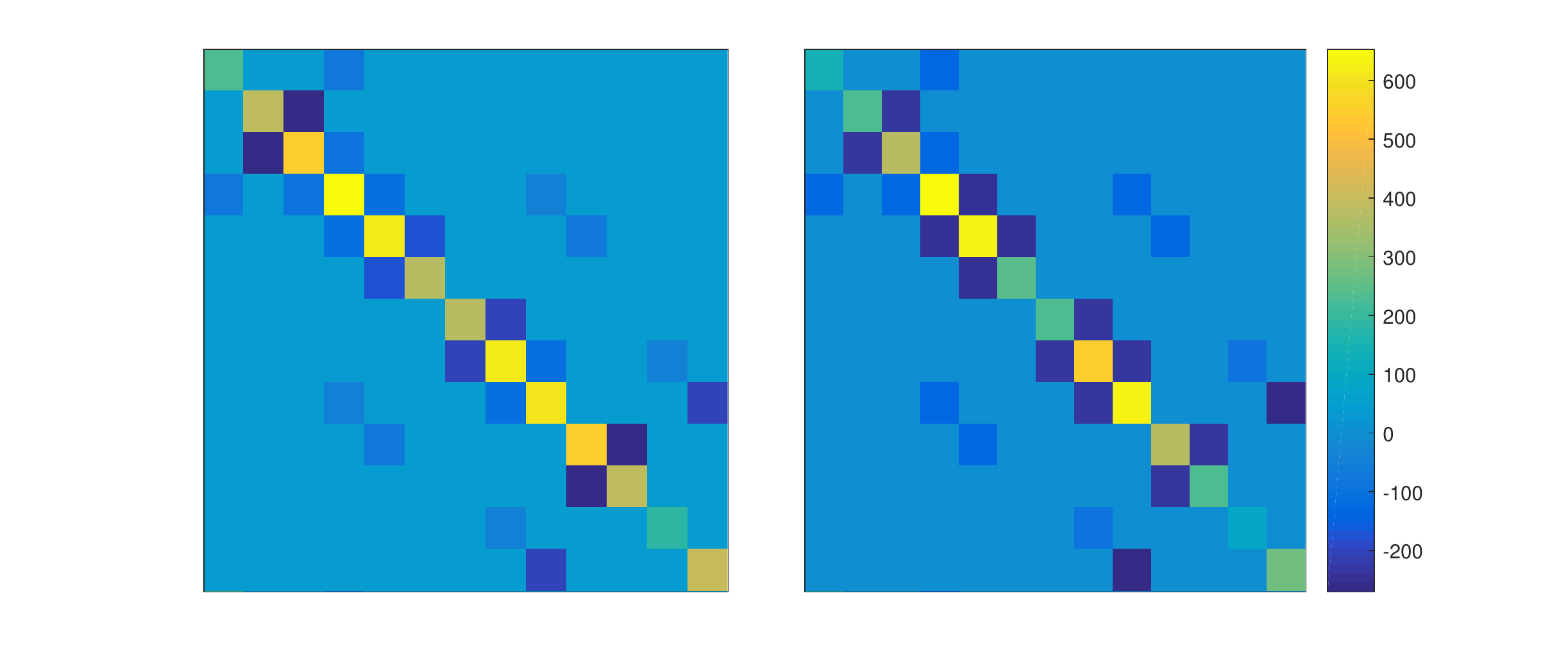}
\caption{The actual (recovered) Laplacian matrix is shown on the left (right).}
\label{fig:lapl}
\end{figure}

The identification and verification schemes were also evaluated on the IEEE 37 bus testbed using 200 Monte Carlo runs for a relative measurement noise of 0.01\%. At every run, the actual topology was randomly drawn. The identification problem in \eqref{eq:idADMM} was solved for $\rho = 1$, $\lambda = 5 \cdot 10^{-3}$, $\mu = 1$, and $\bW = \bI $. Kruskal's algorithm was used to obtain a Laplacian matrix $\tbTheta$ corresponding to a radial grid. The verification problem of \eqref{eq:tv_rel} was solved for $\mu=2 \cdot 10^{-8}$, $\nu=10^{-10}$, and $\bW = \bI$. A feasible vector $\tbb$ was recovered by selecting the lines corresponding to the $N$ largest entries of $\cbb$. 

Given no prior information on line status, the identification task had to choose among all $(36 \cdot 35)/2 = 630$ possible lines. On the other hand, the verification task had to choose among 38 possible lines. The connectivity of $\tbTheta$ and $\bTheta(\tbb)$ were compared against the actual ones. The average number of line status errors are reported in Table~\ref{tbl:linedata}. The errors here include both the energized lines not detected and the non-energized lines detected. The tests show that topology recovery requires approximately $16$ to $160$ $1$-sec probing actions. This yields a topology processing time of less than $160$~sec, which is significantly reduced over the passively collected data schemes of \cite{BoSch13}, \cite{Deka4}, \cite{ParkDeka}, \cite{WengLiaoRajagopal17}, requiring thousands of data samples.

\begin{table}[t]
\renewcommand{\arraystretch}{1.2}
\caption{Average Number of Line Status Errors for the 37-bus feeder}
\label{tbl:linedata} \centering
\begin{tabular}{|l|c|c|c|c|}
\hline
\hline
 & $T = 1$ & $T = 2$ & $T = 5$ & $T = 10$ \\
\hline
\hline
Identification task of \eqref{eq:id3} & 5.07 & 3.92 & 3.73 & 2.69\\
Verification task of \eqref{eq:tv_rel} & 0.32 & 0.21 & 0.08 & 0.01\\
\hline
\hline
\end{tabular}
\end{table}

Finally, the extreme scenario where only two buses perform a single probing action was tested. Figure~\ref{fig:desc_comp} shows the identification and verification results for probing buses $\mcC = \{14,17\}$ and $\mcC = \{4,14\}$. Interestingly, the verification algorithm manages to estimate correctly the line connectivity under the first setup. 

\begin{figure}[t]
\centering	
\includegraphics[width=0.40\textwidth]{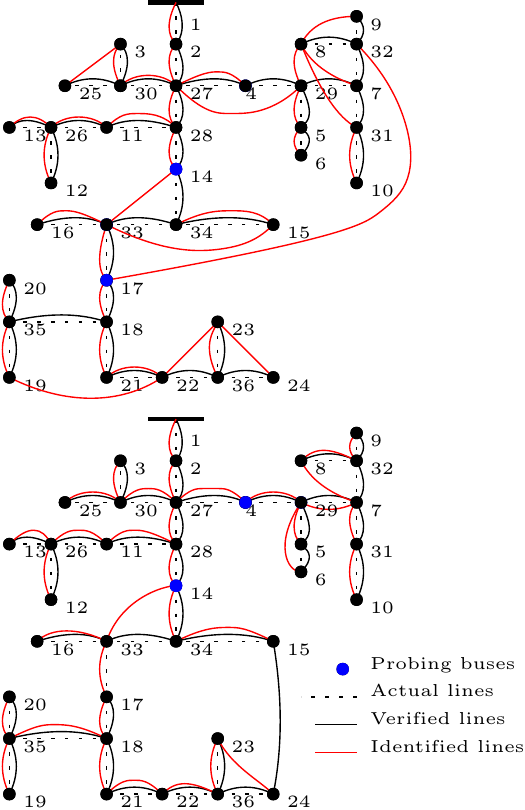}
\caption{Topology identification and verification for the IEEE 37-bus feeder upon probing buses $\{14,17\}$ (top) and $\{4,14\}$ (bottom).}
\label{fig:desc_comp}
\end{figure}

\section{Conclusions}\label{sec:conclusions}
The novel idea of perturbing an electric distribution network through smart inverters to actively collect data and infer its topology has been put forth here. Given voltage data collected via grid probing, the tasks of topology identification and verification have been posed as the non-convex minimization tasks of finding the Laplacian matrix of a radial graph. Using the graph-theoretic notion of a level set, probing only terminal nodes has been analytically shown to be sufficient for exact topology recovery. Convex relaxations of the original minimization problems have been efficiently handled through closed-form ADMM updates. Numerical tests on benchmark feeders demonstrated that the novel grid probing schemes can attain line status error probabilities in the order of $10^{-2}-10^{-3}$ by probing 40\% of the buses. 

Upon introducing grid probing for topology processing, this work sets the foundations for exciting research directions. Combining active and reactive probing could jointly recover the resistive and reactive Laplacian matrices. Generalizing our schemes to multiphase grids constitutes a practically pertinent and challenging problem. Processing voltage magnitude or synchrophasor data at a subset of nodes, optimally selecting probing buses and probing injections, and devising real-time data processing schemes, all constitute challenging research directions.

\appendix
\begin{IEEEproof}[Proof of Lemma~\ref{le:levelsets}]
Consider the leaf node $m\in\mcF$ and its $k$-depth ancestor $n= \alpha^k_m$. Select the subset of leaves $\mcW=\mcF \cap \mcD_n$ with $m\in\mcW$. We need to show that $\mcW$ satisfies \eqref{eq:levelsets}:
\begin{align*}
\bigcap_{w\in \mcW}  \mcN_w^k &= \bigcap_{w\in \mcW} \left(\mcD_{\alpha^{k}_w}\setminus \mcD_{\alpha^{k+1}_w} \right)\\
&= \bigcap_{w\in \mcW} \left(\mcD_n \setminus \mcD_{\alpha^{k+1}_w} \right)\\
&=\mcD_n\setminus \bigcup_{w\in \mcW}\mcD_{\alpha^{k+1}_w}\\
& =\{n\}.
\end{align*}
The first equality follows by the definition in \eqref{eq:levelset}; the second one holds since $n=\alpha^k_w$ for all $w\in\mcW$; the third one is an identity for set operations; and the last holds by recognizing that $\bigcup_{w\in \mcW}\mcD_{\alpha^{k+1}_w}=\mcD_n\setminus \{n\}$.

The converse will be shown by contradiction. Suppose there exists a $\mcW\subseteq\mcF$ with $m\in\mcW$ that satisfies \eqref{eq:levelsets} yet $n\neq \alpha^k_m$. Since the only $k$-depth node in $\mcN^k_m$ is $\alpha_m^k$, node $n$ has to have depth greater than $k$, i.e., $d_n > k$. From Lemma~\ref{le:Nmk}, if $n \in \mcN^k_w$, then $\alpha_n^k\in\mcN_w^k$ as well for all $w\in \mcW$. It therefore follows that
\begin{equation}\label{eq:levelsets1}
\{n,\alpha_n^k\} \subseteq  \bigcap \limits_{w \in \mcW} \mcN^k_w.
\end{equation}
Heed that nodes $n$ and $\alpha_n^k$ are distinct because $d_{\alpha_n^k}=k<d_n$. Hence, the intersection in the RHS of \eqref{eq:levelsets1} is not unique contradicting \eqref{eq:levelsets} and proving the claim.
\end{IEEEproof}

\begin{IEEEproof}[Proof of Lemma~\ref{le:uniqueness}]
For the sake of contradiction, assume there exists another tree graph $\mcG' = (\mcN,\mcE')$ with $\mcE'\neq \mcE$ such that $\mcF(\mcG) = \mcF(\mcG')$ and $\{\mcN_w^k(\mcG) = \mcN_w^k(\mcG')\}_{k=0}^{d_w}$ for all $w \in \mcF(\mcG)$. Note that $d_w(\mcG)=d_w(\mcG')$ for all $w\in\mcF$ since by hypothesis the level sets of $w$ agree across the two graphs. 

The notation $\mcT_n(\mcG)$ represents the vertex-induced subtree created by maintaining only the nodes in $\mcD_n(\mcG)$ along with their incident edges. Since $\mcG' \neq \mcG$, there exists a subtree $\mcT_n(\mcG')$ with the properties: (i) it appears also in the original graph, that is $\mcT_n(\mcG')=\mcT_n(\mcG)$; and (ii) the line $e'=(m,n)\in\mcE'$ feeding the node $n$ in $\mcG'$ does not occur in the original graph ($e'\notin \mcE$). Such a $\mcT_n(\mcG')$ exists and it may be the singleton $\mcT_n(\mcG')=\{n\}$ for any leaf node $n\in \mcF(\mcG)=\mcF(\mcG')$.

Let the depths of nodes $m$ and $n$ in $\mcG'$ be $d_m(\mcG')=k-1$ and $d_n(\mcG')=k$. Then, it holds that
\begin{equation}\label{eq:mparentn}
m = \alpha^{k-1}_n(\mcG'), ~~\text{but}~~ m \neq \alpha^{k-1}_n(\mcG).
\end{equation}

We will next show that $d_m(\mcG')\neq d_m(\mcG)$. To do so, consider a leaf node $s\in \mcT_n(\mcG')$ without excluding the case $s=n$. From \eqref{eq:mparentn}, we have that node $m$ is also the $(k-1)$-depth ancestor of $s$ in $\mcG'$ but not in $\mcG$, that is
\begin{subequations}\label{eq:mparentw}
\begin{align}
m &= \alpha^{k-1}_s(\mcG')\label{eq:mparentw:a}\\
m &\neq \alpha^{k-1}_s(\mcG).\label{eq:mparentw:b}
\end{align}
\end{subequations}
From Lemma~\ref{le:Nmk} and \eqref{eq:mparentw:a}, we get that $m\in \mcN_s^{k-1}(\mcG')$. Equation~\eqref{eq:mparentw:b} on the other hand along with the hypothesis $\mcN_s^{k-1}(\mcG') = \mcN_s^{k-1}(\mcG)$ imply that $d_m(\mcG)> k-1=d_m(\mcG')$. Hence, node $m$ has different depths in $\mcG$ and $\mcG'$.

Consider now the set $\mcW:= \mcD_m(\mcG') \cap \mcF(\mcG')$. By definition of $\mcW$, it holds that $m=\alpha_w^{k-1}(\mcG')$ for all $w \in \mcW$. Invoking Corollary~\ref{co:Wexample}, it follows that
\begin{equation}\label{eq:inter1}
\{m\}= \bigcap \limits_{w \in \mcW} \mcN^{k-1}_w(\mcG').
\end{equation}
However, in $\mcG$ it holds that $m \neq \alpha_w^{k-1}(\mcG)$ for any $w \in \mcW$ since $d_m(\mcG)>k-1$. Hence, Lemma~\ref{le:Nmk} provides that
\begin{equation}\label{eq:inter2}
\{m\} \neq \bigcap \limits_{w \in \mcW} \mcN^{k-1}_w(\mcG).
\end{equation}
The results in \eqref{eq:inter1}--\eqref{eq:inter2} imply that $\mcN_w^{k-1}(\mcG)\neq \mcN_w^{k-1}(\mcG')$ for some leaf nodes $w$, which is a contradiction.
\end{IEEEproof}

\begin{IEEEproof}[Proof of Theorem~\ref{th:probe}]
If $\bb$ is feasible, it is also a minimizer of \eqref{eq:tv} if and only if it yields zero objective value as $\bb_o$ does. That happens when $\bR(\bb)\bI_{\mcC}\bDelta=\bR(\bb_o)\bI_{\mcC}\bDelta$. Since $\textrm{rank}(\bDelta)=C$, the latter implies $\bR(\bb)\bI_{\mcC}=\bR(\bb_o)\bI_{\mcC}$, or 
\begin{equation}\label{eq:Re=Re}
\bR(\bb) \be_m = \bR(\bb_o) \be_m, \forall m\in \mcC. 
\end{equation}
Heed that, if $\bR(\bb) \be_m=\bR(\bb_o) \be_m$, Lemma~\ref{le:entriesR2} (claim (ii)) implies that for $d_m(\bb) = d_m(\bb_o)$ and that, for all $k=1,\ldots,d_m$ 
\begin{equation*}
\mcN^{k}_m(\bb) = \mcN^{k}_m(\bb_o).
\end{equation*}
For every $n \in \mcN_m^k(\bb)$ and $s\in \mcN_m^{k-1}(\bb)$, \eqref{eq:Re=Re} implies
\begin{equation}\label{eq:same_diff}
[R(\bb)]_{mn} - [R(\bb)]_{ms} = [R(\bb_o)]_{mn} - [R(\bb_o)]_{ms}.
\end{equation}
Combining \eqref{eq:same_diff} with Lemma~\ref{le:entriesR2} (claim (iii)), provides
\begin{equation}
[R(\bb)]_{mn} - [R(\bb)]_{ms} = r_{\alpha_m^k(\bb_o) , \alpha_m^{k-1}(\bb_o)}.
\label{eq:same_parent}
\end{equation}
Under Assumption~\ref{ass:diff_resistances}, no two lines have the same resistance. Therefore, equation \eqref{eq:same_parent} is satisfied only if $\alpha_m^k(\bb) = \alpha_m^{k}(\bb_o)$ for $k=0,\ldots,d_m$, and hence, $\mcA_m(\bb) = \mcA_m(\bb_o)$ follows.

To prove sufficiency, suppose the conditions in \eqref{eq:probe} hold for a probing bus $m$. Equations \eqref{eq:probe:a} and \eqref{eq:entriesR1} ensure that $d_m(\bb) = d_m(\bb_o)$ and that  $[R(\bb)]_{mn} = [R(\bb_o)]_{mn}$ for all $n\in\mcA_m(\bb_o)$. On the other hand, Lemma~\ref{le:entriesR2} guarantees that $[R(\bb)]_{mn} = [R(\bb_o)]_{mn}$ for all $n\in\mcN^k_m({\bb})$ and $k=0,\ldots,d_m$.
It therefore follows that $\bR(\bb) \bI_{\mcC} = \bR(\bb_o) \bI_{\mcC}$.
\end{IEEEproof}

\bibliographystyle{IEEEtran}
\bibliography{main}

\end{document}